\documentclass[reqno]{article}
\usepackage{latexsym}
\usepackage{amsmath}
\usepackage{amssymb}
\usepackage{amsthm}
\usepackage{amscd}

\newtheorem{theorem}{Theorem}
\newtheorem{lemma}{Lemma}
\newtheorem{remark}{Remark}
\newtheorem{definition}{Definition}

\newtheorem{corollary}{Corollary}

\begin{document}

\title{An intrinsic square function on weighted Herz spaces with variable exponent}

\date{}

\maketitle

\begin{center}
{\large Mitsuo Izuki{\small *}} 
and {\large  Takahiro Noi{\small **}}
\end{center}

\begin{enumerate}
    \item[*]{\it Faculty of Education, \\
Okayama University, \\ 
3-1-1 Tsushima-naka, Okayama 700-8530, Japan \\
    {
     e-mail: izuki@okayama-u.ac.jp}}
    \item[**]{\it Department of Mathematics and Information Science, \\
Tokyo Metropolitan University, \\ 
Hachioji, 192-0397, Japan \\ 
    {
     e-mail: taka.noi.hiro@gmail.com}} 
\end{enumerate}

\begin{abstract}
We define new generalized Herz spaces having weight and variable exponent, 
that is, weighted Herz spaces with variable exponent. 
We prove the boundedness of an intrinsic square function on those spaces 
under proper assumptions on each exponent and weight. 
\end{abstract}

\vspace{1.5cm}
\noindent{\bf Keywords}\ \ 
Herz spaces, Muckenhoupt weight, variable exponent, intrinsic square function.

\noindent{\bf $2010$ Mathematics Subject Classification}\ \ 42B35

\section{Introduction}
\noindent

The boundedness of the Hardy--Littlewood maximal operator $M$ 
on function spaces is very important in real analysis
because it realizes 
boundedness of many other operators, 
for example, singular integrals, fractional integrals, 
and commutators involving {\rm BMO} functions. 
Muckenhoupt \cite{Muckenhoupt1972} 
has established the theory on weights called the Muckenhoupt $A_p$ theory 
in the study of weighted function spaces and 
greatly developed real analysis. 

On the other hand, the theory on function spaces with variable exponent 
has been rapidly developed after the work \cite{KR} 
where Kov\'a$\mathrm{ \check{c}}$ik and R\'akosn\'ik 
have clarified fundamental properties of Lebesgue spaces with variable exponent. 
On spaces with variable exponent, the boundedness of the operator $M$ is 
also a notable problem. 
As sufficient conditions for the boundedness 
the log-H\"older continuous conditions have been established and 
well-known now (\cite{CFN1,CFN2,Diening2004,Diening2005,INS-SMJ}). 

Recently a generalization of the Muceknhoupt weights in terms of variable exponent 
has been studied. 
Diening and H\"ast\"o \cite{DH-preprint} have initially defined 
the new class of weights $A_{p(\cdot)}$ and 
proved the equivalence between the $A_{p(\cdot)}$ condition and
the boundedness of $M$ on weighted Lebesgue spaces with variable exponent. 
The equivalence has been independently proved by 
Cruz-Uribe, Fiorenza and Neugebauer \cite{CFN2012}. 
The first author and his collaborators have studied 
the relation between $A_{p(\cdot)}$ and the wavelet theory 
(\cite{IzukiJAA2013,INS-weighted}). 

An intrinsic square function is one of the remarkable operators in modern real analysis. 
Many researchers have studied 
characterizations of general function spaces via 
intrinsic functions (\cite{ref1,ref2,ref3,Wang2014,ref4,ref5,Wilson,ref6}). 
In particular we focus on the work \cite{Wang2014} by Wang 
where the boundedness of 
 some intrinsic functions including $S_{\beta}$ 
on weighted Herz spaces has been proved under proper assumptions on 
every exponent and weight. 
Our aim in this paper is to extend the boundedness of 
the intrinsic square function $S_{\beta}$ to 
the variable exponent case. 
We will define weighted Herz spaces with variable exponent
having variable integral exponent $p(\cdot)$ and weight $w$, 
and prove the boundedness of $S_{\beta}$ on those spaces 
based on fundamental facts on general Banach function spaces. 

Throughout this paper we will use the following notation.

\begin{enumerate}
\item 
The symbol $C$ always denotes a positive constant 
independent of main parameters. We remark that 
the value of $C$ may be different 
from one occurrence to another.
\item
Given a measurable set $S\subset \mathbb{R}^n$, 
we denote the Lebesgue measure of $S$ by $|S|$.
In addition, $\chi_S$ means the characteristic function of $S$. 
\item 
A ball is always an open ball in $\mathbb{R}^n$, that is, 
a ball $B$ is a set given by 
\[
B:=\{  y\in \mathbb{R}^n \, : \, |x-y|<r \} ,
\]
using a point $x\in \mathbb{R}^n$ and a positive number $r$. 
\end{enumerate}


\section{Preliminaries}

\subsection{Lebesgue spaces with variable exponent}
\noindent

We first define Lebesgue space with variable exponent.

\begin{definition}
Let $p(\cdot) \, : \, \mathbb{R}^n \to [1,\, \infty)$ be a measurable function. 
The Lebesgue space $L^{p(\cdot)}(\mathbb{R}^n)$ with variable exponent $p(\cdot)$
is  
the set of all complex-valued measurable functions $f$ defined on $\mathbb{R}^n$ 
satisfying
\[
\rho_p(f):= \int_{\mathbb{R}^n} |f(x)|^{p(x)}dx <\infty . 
\]
\end{definition}

It is known (cf. \cite{INS-SMJ,KR}) that the Lebesgue space $L^{p(\cdot)}(\mathbb{R}^n)$ 
becomes a Banach space equipped with a norm given by
\[
\| f \|_{L^{p(\cdot)}(\mathbb{R}^n)}:=
\inf \left\{  \lambda >0 \, : \, 
\rho_p\left( \frac{f}{\lambda} \right) \le 1 \right\}  .
\]
The measurable function $p(\cdot)$ is called a variable exponent 
in variable exponent analysis. 
In order to state variable exponent spaces deeply 
we define some notations on variable exponents. 

\begin{definition}
\begin{enumerate}
\item 
Given a measurable function $r(\cdot) \, : \, \mathbb{R}^n \to (0,\, \infty)$, we write 
\[
r_+:= \| r(\cdot) \|_{L^{\infty}(\mathbb{R}^n)}, \quad 
r_-:= \left\{  \left( \frac{1}{r(\cdot)} \right)_+ \right\}^{-1} .
\]
\item
The set $\mathcal{P}(\mathbb{R}^n)$ consists of all variable exponents
$p(\cdot)\, : \, \mathbb{R}^n \to [1,\, \infty)$ satisfying 
$1<p_-\le p_+<\infty$. 
\item
A measurable function $r(\cdot) \, : \, \mathbb{R}^n \to (0,\, \infty)$ 
is said to be globally log-H\"older continuous if it satisfies the following two inequalities
\begin{eqnarray*}
|r(x)-r(y)|
&\le & \frac{C}{-\log (|x-y|)} \quad (|x-y| \le 1/2), \\
|r(x)-r_{\infty}|
&\le&
\frac{C}{\log(e+|x|)} \quad (x\in \mathbb{R}^n)
\end{eqnarray*}
for some real constant $r_{\infty}$.
The set $LH(\mathbb{R}^n)$ consists of all globally log-H\"older continuous 
functions. 
\end{enumerate}
\end{definition}

The globally log-H\"older continuous conditions are famous 
because they ensure the boundedness of the Hardy--Littlewood maximal operator $M$, 
defined by
\[
Mf(x):=\sup_{B\, : \, \mathrm{ball}, \, x\in B}
\int_B |f(y)|\, dy ,
\]
on Lebesgue spaces with variable exponent.
Hence we often consider those conditions as 
standard assumptions in the study of 
function spaces with variable exponents 
(cf. \cite{CFN1,CFN2,Diening2004,Diening2005,INS-SMJ}).


\subsection{Weighted Banach function spaces}
\noindent

We define Banach function space 
and state fundamental properties of it based on the book \cite{BS-Book} by  
Bennett and Sharpley.
For further informations on the theory of Banach function space 
including the proof of Lemma \ref{Lemma-Banach-associate}
 below we refer to the book. 
We additionally show some properties 
of Banach function spaces in terms of boundedness of
the Hardy--Littlewood maximal operator. 
We will also consider the weighted case 
based on the paper \cite{KarlovichSpitkovsky} by 
Karlovich and Spitkovsky.

\begin{definition}
Let $\mathcal{M}$ be the set of all complex-valued measurable functions 
defined on $\mathbb{R}^n$, and 
$X$ a linear subspace of $\mathcal{M}$. 
\begin{enumerate}
\item
The space $X$ is said to be a Banach function space if there exists a functional 
$\| \cdot \|_X \, : \, \mathcal{M} \to [0,\infty]$ 
satisfying the following properties: 
Let $f, \, g, \, f_j \in \mathcal{M}$ $(j=1, \, 2, \, \cdots)$, then 
\begin{enumerate} 
\item
$f\in X$ holds if and only if $\| f \|_X<\infty$.
\item Norm property:
  \begin{enumerate}
  \item Positivity: $\| f \|_X \ge 0$.
  \item Strict positivity: $\| f \|_X=0$ holds if and only if $f(x)=0$ for almost every 
                                  $x\in \mathbb{R}^n$.
  \item Homogeneity: $\| \lambda f \|_X=|\lambda| \cdot \| f \|_X$ holds for all 
                              complex numbers $\lambda$.
  \item Triangle inequality: $\| f+g \|_X \le \| f \|_X +\| g \|_X$.
  \end{enumerate}

\item Symmetry: $\| f \|_X= \| |f| \|_X$.
\item Lattice property: If $0\le g(x)\le f(x)$ for almost every $x\in \mathbb{R}^n$, 
         then $\| g \|_X \le \| f \|_X$.
\item Fatou property: 
If $0\le f_j(x)\le f_{j+1}(x)$ for all $j$ 
and $f_j(x)\to f(x)$ as $j\to \infty$
for 
almost every $x\in \mathbb{R}^n$, then 
$\displaystyle{\lim_{j\to \infty} \| f_j \|_X  =\| f \|_X}$. 

\item
For every measurable set $F\subset \mathbb{R}^n$ such that $|F|<\infty$, 
$\| \chi_F \|_X$ is finite. Additionally there exists a constant $C_F>0$ 
depending only on $F$ such that for all $h\in X$, 
\[
\int_F |h(x)|\, dx \le C_F \| h \|_X.
\]
\end{enumerate}

\item
Suppose that $X$ is a Banach function space 
equipped with a norm $\| \cdot \|_X$.
The associated space $X'$ is defined by
\[
X':=\{  f\in \mathcal{M} \, : \, \| f \|_{X'}<\infty  \} ,
\]
where
\[
\| f \|_{X'}:=\sup_g \left\{
\left| \int_{\mathbb{R}^n} f(x)g(x)\, dx \right|
\, : \, \| g \|_X\le 1
\right\} .
\]
\end{enumerate}
\end{definition}

\begin{lemma}
\label{Lemma-Banach-associate}
Let $X$ be a Banach function space. 
Then the following hold:
\begin{enumerate}
\item
The associated space $X'$ is also a Banach function space.

\item (The Lorentz--Luxemberg theorem. ) 
$(X')'=X$ holds, in particular, 
the norms $\| \cdot \|_{(X')'}$ and $\| \cdot \|_X$ are 
equivalent. 
\item (The generalized H\"older inequality. ) 
If $f\in X$ and $g\in X'$, then we have
\[ \int_{\mathbb{R}^n} |f(x)g(x)|\, dx
\le \| f \|_X \| g \|_{X'}.
\]
\end{enumerate}
\end{lemma}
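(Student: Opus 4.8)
The lemma is classical; see \cite{BS-Book}. I would prove it in the order (3), (1), (2), since (3) is used in the proof of (2) and (1) is used (twice) in (2). Part (3), the generalized H\"older inequality, is almost immediate from the definition of $\|\cdot\|_{X'}$: given $f\in X$ with $\|f\|_X\neq0$ and $g\in X'$, put $h:=|f|\,\overline{\mathrm{sgn}\,g}\,/\|f\|_X$, where $\mathrm{sgn}\,g:=g/|g|$ on $\{g\neq0\}$ and $0$ elsewhere; then $|h|\le|f|/\|f\|_X$, so the lattice property and symmetry give $\|h\|_X\le1$, while $hg=|f|\,|g|/\|f\|_X\ge0$, whence $\int_{\mathbb{R}^n}|f(x)g(x)|\,dx=\|f\|_X\bigl|\int_{\mathbb{R}^n}h(x)g(x)\,dx\bigr|\le\|f\|_X\|g\|_{X'}$; the case $\|f\|_X=0$ is trivial.

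For (1), I would verify the axioms (a)--(f) for $\|\cdot\|_{X'}$. Property (a) is the definition of $X'$; positivity, homogeneity and the triangle inequality are immediate from the defining supremum; symmetry and the lattice property follow from the same properties of $X$ via the observation that $\|g\|_X\le1$ implies $\||g|\|_X\le1$; the Fatou property follows by moving the monotone limit through the supremum with the monotone convergence theorem; strict positivity follows by testing $f$ against $\overline{\mathrm{sgn}\,f}\,\chi_F/\|\chi_F\|_X$, which lies in the unit ball of $X$ because $\|\chi_F\|_X<\infty$ by axiom (f) for $X$, and letting $F$ run through an exhaustion of $\mathbb{R}^n$ by sets of finite measure; finally axiom (f) for $X'$ follows from axiom (f) for $X$: one gets $\|\chi_F\|_{X'}\le C_F<\infty$, and $\int_F|h(x)|\,dx\le\|\chi_F\|_X\|h\|_{X'}$ from (3) applied to $\chi_F\in X$ after a routine truncation.

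For (2), the inequality $\|f\|_{(X')'}\le\|f\|_X$ is immediate from (3): for $g\in X'$ with $\|g\|_{X'}\le1$ one has $\bigl|\int_{\mathbb{R}^n}fg\bigr|\le\int_{\mathbb{R}^n}|fg|\,dx\le\|f\|_X$, and one takes the supremum over such $g$. For the reverse inequality I would argue in two stages. \emph{Stage 1:} for $f\ge0$ bounded with support of finite measure --- so that $f\in X$, since $f\le\|f\|_{L^\infty}\chi_{\mathrm{supp}\,f}$ and $\|\chi_{\mathrm{supp}\,f}\|_X<\infty$ --- prove $\|f\|_X=\sup\{\int_{\mathbb{R}^n}fg\,dx:g\ge0,\ \|g\|_{X'}\le1\}$; here ``$\ge$'' is again (3), and for ``$\le$'' one uses the Hahn--Banach theorem to produce a norm-one linear functional on $X$ attaining $\|f\|_X$ at $f$ and then shows that, for $f$ of this special form, the pairing is realised by integration against a nonnegative function in the unit ball of $X'$ (the ``resonance'' property of the $X$-norm). \emph{Stage 2:} for general $f\in(X')'$ one first notes $|f|<\infty$ a.e.\ (from axiom (f) for $X'$), sets $f_j:=\min(|f|,j)\,\chi_{E_j}$ with $E_j\uparrow\mathbb{R}^n$, $|E_j|<\infty$, observes $f_j\uparrow|f|$ and $\|f_j\|_X=\|f_j\|_{(X')'}\le\|f\|_{(X')'}$ by Stage 1 and the lattice property of $(X')'$ (a Banach function space by applying (1) twice), and concludes via the Fatou property of $X$ that $\|f\|_X=\lim_j\|f_j\|_X\le\|f\|_{(X')'}$; in particular $f\in X$ and the norms are equivalent.

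The only step that is not a formal consequence of the axioms is the resonance argument in Stage 1 --- recovering $\|f\|_X$, for a bounded function with finite-measure support, as a supremum of pairings against the unit ball of $X'$. Everything else is a direct consequence of the definitions (the two H\"older-type inequalities) or a routine check of the Banach-function-space axioms combined with a monotone-convergence/Fatou approximation.
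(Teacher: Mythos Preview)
The paper does not actually prove this lemma; it explicitly defers to Bennett and Sharpley \cite{BS-Book} (see the sentence immediately preceding the lemma in Section~2.2). Your outline is essentially the classical route found there, with the natural ordering (3) $\to$ (1) $\to$ (2), so in that sense you are in agreement with the paper's ``proof''.

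One point deserves sharpening. In Stage~1 of part~(2) you invoke Hahn--Banach ``on $X$'' to produce a norming functional for $f$. Taken literally this is problematic: in general $X^{*}$ is strictly larger than $X'$ (the dual may contain ``singular'' functionals not represented by integration), so the Hahn--Banach extension need not be realised by any $g\in X'$, even when $f$ is bounded with support of finite measure. The standard fix---and what Bennett--Sharpley effectively do---is to work instead on the subspace of simple functions, where for a fixed simple $f$ the problem is essentially finite-dimensional and every linear functional is automatically given by integration against a measurable function whose $X'$-norm is controlled by the operator norm; one then passes to general $f$ exactly as in your Stage~2 via the Fatou property. Since you already flag this as the one non-formal step (the ``resonance'' argument), this is a matter of making the plan precise rather than a genuine gap.
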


Kov$\acute{\rm{a}}\check{\rm{c}}$ik and R$\acute{\rm{a}}$kosn\'ik 
\cite{KR} have proved that 
the generalized Lebesgue space
$L^{p(\cdot)}(\mathbb{R}^n)$ with variable exponent 
$p(\cdot)$
is a Banach function space and the associate space is 
$L^{p'(\cdot)}(\mathbb{R}^n)$
with norm equivalence, where $p'(\cdot)$ is the conjugate exponent given 
by $\frac{1}{p(\cdot)} +\frac{1}{p'(\cdot)}=1$.

If we assume some conditions for boundedness of the 
Hardy--Littlewood maximal operator $M$ on X, 
then the norm $\| \cdot \|_X$ has properties similar to 
the Muckenhoupt weights.

\begin{lemma}
\label{Lemma-Izuki-Banach}
Let $X$ be a Banach function space. 
Suppose that the Hardy--Littlewood maximal operator 
$M$ is weakly bounded on $X$, that is, 
\begin{equation}
\| \chi_{\{ Mf>\lambda \} } \|_{X}
\le C\lambda^{-1} \| f \|_X
\label{weakly bounded}
\end{equation}
is true for all $f\in X$ and $\lambda >0$.
Then we have 
\begin{equation}
\sup_{B:\mathrm{ball}} 
\frac{1}{|B|} \| \chi_B \|_X \| \chi_B \|_{X'} < \infty .
\label{Banach-Muckenhoupt}
\end{equation}
\end{lemma}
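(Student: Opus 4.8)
The plan is to fix an arbitrary ball $B$ and to bound $\|\chi_B\|_{X'}$ from above by $2C|B|/\|\chi_B\|_X$, where $C$ is the constant in \eqref{weakly bounded}; dividing by $|B|$ and taking the supremum over all balls $B$ then gives \eqref{Banach-Muckenhoupt} with constant $2C$.

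First I would record that the Banach function space axioms guarantee $0<\|\chi_B\|_X<\infty$ (finiteness because $|B|<\infty$, strict positivity because $\chi_B$ does not vanish a.e.), so the quotient makes sense. By the definition of the associated norm together with the symmetry property, it suffices to estimate $\int_B g(x)\,dx$ for an arbitrary $g\in\mathcal{M}$ with $g\ge 0$ and $\|g\|_X\le 1$, since the supremum of these integrals is exactly $\|\chi_B\|_{X'}$.

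For such a $g$, set $\alpha:=|B|^{-1}\int_B g(y)\,dy$; we may assume $\alpha>0$, as otherwise there is nothing to prove. Using $B$ itself as an admissible averaging ball in the supremum defining $Mg$, we obtain $Mg(x)\ge\alpha$ for a.e.\ $x\in B$, hence $\chi_B\le\chi_{\{Mg>\alpha/2\}}$ a.e. The lattice property and then \eqref{weakly bounded} yield
\[
\|\chi_B\|_X\le\bigl\|\chi_{\{Mg>\alpha/2\}}\bigr\|_X\le\frac{2C}{\alpha}\,\|g\|_X\le\frac{2C}{\alpha},
\]
so that $\int_B g(x)\,dx=\alpha\,|B|\le 2C|B|/\|\chi_B\|_X$. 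Taking the supremum over all admissible $g$ gives $\|\chi_B\|_{X'}\le 2C|B|/\|\chi_B\|_X$, which is the claimed estimate.

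I do not anticipate a genuine obstacle here; the only points requiring a little care are passing to the level set $\{Mg>\alpha/2\}$ rather than $\{Mg>\alpha\}$ so that the strict inequality in \eqref{weakly bounded} causes no trouble, reducing to nonnegative test functions $g$ via the symmetry axiom, and checking that $\|\chi_B\|_X$ is positive and finite so that dividing by it is legitimate.
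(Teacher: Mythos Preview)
Your argument is correct. The paper does not actually include a proof of this lemma in the text; it only cites \cite[Lemmas~2.4 and~2.5]{IzukiJAA2013} and \cite[Lemmas~G' and~H]{IST2014}. That said, your method---observing that $B\subset\{Mg>\alpha/2\}$ for any nonnegative $g\in X$ with average $\alpha$ on $B$, then invoking the weak-type bound---is precisely the device the paper \emph{does} exhibit in its proof of the closely related Lemma~\ref{Lemma-1}, so your approach is entirely in the spirit of the paper and presumably of the cited references as well.
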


The proof of Lemma \ref{Lemma-Izuki-Banach} is found in 
the first author's paper 
\cite[Lemmas 2.4 and 2.5]{IzukiJAA2013}
and \cite[Lemmas G' and H]{IST2014}. 

\begin{remark}
\label{Remark-Banach}
If $M$ is bounded on $X$, that is, 
\[
\| Mf \|_X\le C\, \| f \|_X 
\]
holds for all $f\in X$, then 
we can easily check that (\ref{weakly bounded}) holds. 
On the other hand, if $M$ is bounded on the associate space $X'$, 
then Lemma \ref{Lemma-Banach-associate} shows that 
(\ref{Banach-Muckenhoupt}) is true. 
\end{remark}

Below we define weighted Banach function space
and give some properties of it. 
Let $X$ be a Banach function space. 
The set $X_{\mathrm{loc}}(\mathbb{R}^n)$ consists of all measurable function $f$ 
such that $f\chi_E \in X$ for any compact set $E$ with $|E|<\infty$.  
Given a function $W$ such that 
$0<W(x)<\infty$ for almost every $x\in \mathbb{R}^n$, 
$W\in X_{\mathrm{loc}}(\mathbb{R}^n)$ 
and $W^{-1} \in (X')_{\mathrm{loc}}(\mathbb{R}^n)$, 
we define the weighted Banach function space 
\[
X(\mathbb{R}^n ,W):=
\left\{ f\in \mathcal{M}
\, : \, fW \in X
\right\} .
\]
Then the following hold.

\begin{lemma} \
\label{Lemma-weighted-Banach}
\begin{enumerate}
\item
The weighted Banach function space $X(\mathbb{R}^n , W)$ is a Banach function space 
equipped the norm
\[
\| f \|_{X(\mathbb{R}^n ,W)}:=\| f W \|_{X} .
\]

\item
The associate space of $X(\mathbb{R}^n ,W)$ is also a Banach function space 
and equals to $X'(\mathbb{R}^n ,W^{-1})$. 
\end{enumerate}
\end{lemma}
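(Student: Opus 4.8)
The plan is to verify the two assertions directly from the definition of a Banach function space, transferring each of the required properties from $X$ (and, for the second part, from $X'$) through the pointwise multiplication by the fixed weight $W$. Throughout, the hypotheses $0<W(x)<\infty$ a.e., $W\in X_{\mathrm{loc}}(\mathbb{R}^n)$ and $W^{-1}\in(X')_{\mathrm{loc}}(\mathbb{R}^n)$ will be used to guarantee that $\|\chi_F\|_{X(\mathbb{R}^n,W)}$ is finite for sets of finite measure and that the local-integrability axiom survives.

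For part (1), I would check the axioms (a)--(f) for the functional $\|f\|_{X(\mathbb{R}^n,W)}:=\|fW\|_X$. Axiom (a) is immediate since $f\in X(\mathbb{R}^n,W)$ iff $fW\in X$ iff $\|fW\|_X<\infty$. The norm properties (b) follow from those of $\|\cdot\|_X$: positivity and homogeneity are clear; strict positivity uses that $W>0$ a.e., so $\|fW\|_X=0$ forces $fW=0$ a.e.\ hence $f=0$ a.e.; the triangle inequality is $\|(f+g)W\|_X\le\|fW\|_X+\|gW\|_X$. Symmetry (c) holds because $|fW|=|f|W$. For the lattice property (d), if $0\le g\le f$ a.e.\ then $0\le gW\le fW$ a.e., so $\|gW\|_X\le\|fW\|_X$. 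The Fatou property (e) transfers likewise: if $0\le f_j\uparrow f$ a.e.\ then $0\le f_jW\uparrow fW$ a.e., and the Fatou property of $X$ gives $\|f_jW\|_X\to\|fW\|_X$. Finally, for axiom (f): given $F$ with $|F|<\infty$, write $\chi_F W=(\chi_F W)$ and note $\chi_F W\in X$ because $W\in X_{\mathrm{loc}}(\mathbb{R}^n)$, so $\|\chi_F\|_{X(\mathbb{R}^n,W)}=\|\chi_F W\|_X<\infty$; and for $h\in X(\mathbb{R}^n,W)$ we estimate $\int_F|h|\,dx=\int_F|hW|\,W^{-1}\,dx\le\|(hW)\chi_F\|_X\,\|W^{-1}\chi_F\|_{X'}$ by the generalized H\"older inequality of Lemma~\ref{Lemma-Banach-associate}, and the second factor is finite since $W^{-1}\in(X')_{\mathrm{loc}}(\mathbb{R}^n)$; thus $C_F=\|W^{-1}\chi_F\|_{X'}$ works.

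For part (2), the claim is $X(\mathbb{R}^n,W)'=X'(\mathbb{R}^n,W^{-1})$ with norm equality. The key observation is the substitution $g\mapsto gW^{-1}$ in the definition of the associate norm: for $f\in\mathcal{M}$,
\[
\|f\|_{X(\mathbb{R}^n,W)'}
=\sup\left\{\left|\int_{\mathbb{R}^n}f(x)g(x)\,dx\right| : \|gW\|_X\le 1\right\}
=\sup\left\{\left|\int_{\mathbb{R}^n}(fW^{-1})(x)h(x)\,dx\right| : \|h\|_X\le 1\right\}
=\|fW^{-1}\|_{X'},
\]
where the middle step sets $h:=gW$, a bijection of $\mathcal{M}$ since $0<W<\infty$ a.e. Hence $\|f\|_{X(\mathbb{R}^n,W)'}=\|fW^{-1}\|_{X'}=\|f\|_{X'(\mathbb{R}^n,W^{-1})}$, which is exactly the norm of the weighted Banach function space built from the Banach function space $X'$ (part (1) of Lemma~\ref{Lemma-Banach-associate}) with weight $W^{-1}$; the roles of the local-membership hypotheses on $W$ and $W^{-1}$ simply swap, so part (1) applies verbatim to show $X'(\mathbb{R}^n,W^{-1})$ is a Banach function space. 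I do not anticipate a genuine obstacle here; the only point requiring a little care is axiom (f) for the weighted space — ensuring $\|\chi_F\|$ is finite and producing the constant $C_F$ — which is precisely where the two assumptions $W\in X_{\mathrm{loc}}$ and $W^{-1}\in(X')_{\mathrm{loc}}$ are consumed, and the generalized H\"older inequality is the tool that converts them into the embedding into $L^1(F)$.
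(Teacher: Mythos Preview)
Your direct axiom-by-axiom verification is correct and is exactly the kind of argument one would expect; note, however, that the paper itself does not give a proof of this lemma at all --- it simply states that ``the proof is found in \cite{KarlovichSpitkovsky}.'' So your write-up supplies what the paper outsources, and the substitution $h:=gW$ you use for part (2) is the natural (and standard) way to identify the associate space.

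One small technical wrinkle worth flagging: the paper defines $X_{\mathrm{loc}}(\mathbb{R}^n)$ via \emph{compact} sets $E$, whereas axiom (f) must be verified for arbitrary \emph{measurable} $F$ with $|F|<\infty$. Your line ``$\chi_F W\in X$ because $W\in X_{\mathrm{loc}}(\mathbb{R}^n)$'' therefore tacitly assumes the two notions coincide. For unbounded $F$ of finite measure this does not follow directly from the paper's definition as written; this is almost certainly an oversight in the paper's phrasing (the cited source presumably uses a consistent convention), but if you want your proof to be self-contained against the definitions exactly as stated here, you should either (i) restrict axiom (f) to compact $F$, or (ii) note that the intended meaning of $X_{\mathrm{loc}}$ is membership of $f\chi_E$ in $X$ for all measurable $E$ of finite measure, which is what your argument actually uses.
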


The properties above naturally arise from those of usual Banach function spaces 
and the proof is found in \cite{KarlovichSpitkovsky}.


\subsection{Muckenhoupt weights with variable exponent}
\noindent

A locally integrable and positive function defined on $\mathbb{R}^n$ 
is called a weight. 
We define 
fundamental classes of weights 
known as the Muckenhoupt classes 
in terms of variable exponent.

\begin{definition}
\label{def-variable-Muckenhoupt}
Suppose $p(\cdot) \in \mathcal{P}(\mathbb{R}^n)$. 
A weight $w$ is said to be an $A_{p(\cdot)}$ weight if 
\[
\sup_{B:\mathrm{ball}} \frac{1}{|B|} \| w^{1/p(\cdot)}\chi_B \|_{L^{p(\cdot)}(\mathbb{R}^n)}
\| w^{-1/p(\cdot)}\chi_B \|_{L^{p'(\cdot)}(\mathbb{R}^n)}
<\infty .
\]
The set $A_{p(\cdot)}$ consists of all $A_{p(\cdot)}$ weights.
\end{definition}

\begin{remark}
Our symbol $A_{p(\cdot)}$ 
differs from that in the papers \cite{CFN2012,DH-preprint}.
If $p(\cdot)\in \mathcal{P}(\mathbb{R}^n)$ equals to a constant 
$p\in (1, \, \infty)$, then 
the definition above is equivalent to the well known Muckenhoupt $A_p$ weights 
(\cite{Muckenhoupt1972}). 
\end{remark}

We shall give the definitions of the Muckenhoupt classes 
$A_p$ with $p=1, \, \infty$.

\begin{definition}
\begin{enumerate}
\item
A weight $w$ is said to be a Muckenhoupt $A_1$ weight if 
$Mw(x)\le C\, w(x)$ holds for almost every $x\in \mathbb{R}^n$. 
The set $A_1$ consists of all Muckenhoupt $A_1$ weights. 
For every $w\in A_1$, the finite value 
\[
[w]_{A_1}:=\sup_{B:\mathrm{ball}}
\left\{
\frac{1}{|B|} \int_B w(x)\, dx
\cdot \| w^{-1} \|_{L^{\infty}(B)}
\right\}
\]
is said to be a Muckenhoupt $A_1$ constant.
\item
A weight belonging to the set 
\[
A_{\infty}:=\bigcup_{1<p<\infty}A_p
\]
is said to be a Muckenhoupt $A_{\infty}$ weight. 
\end{enumerate}
\end{definition}

\begin{remark}
\begin{enumerate}
\item
We note that if $w\in A_1$, then 
\[ \frac{1}{|B|} \int_B w(x)\, dx
\le [w]_{A_1} \inf_{x\in B} w(x)
\]
holds for all balls $B$. 
\item
It is known that the monotone property 
$A_p \subset A_q \subset A_{\infty}$
holds for every constants $1\le p<q<\infty$.
\end{enumerate}
\end{remark}

We will use a classical result on the Muckenhoupt weights. 
Below we write 
$$w(S):=\int_S w(x) \, dx$$
for a measurable set $S$ and a weight $w$. 

\begin{lemma}[Chapter 7 in \cite{Duoa-Book}]
\label{Lemma-Muckenhoupt}
If $w\in A_1$, then
there exist positive constants 
$\delta<1$ and $C$ depending only on $n$ and $[w]_{A_1}$ 
such that for all balls $B$ and all measurable sets $E\subset B$, 
\[
\frac{w(E)}{w(B)} \le C
\left( \frac{|E|}{|B|} \right)^{\delta} .
\]
\end{lemma}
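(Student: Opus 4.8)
This is a classical $A_\infty$-type estimate, and the plan is to extract it from the $A_1$ hypothesis in two stages: first a crude qualitative comparison valid uniformly over all balls, then a Calder\'on--Zygmund stopping-time iteration that upgrades it to the quantitative power bound. (An alternative route goes through the reverse H\"older inequality for $A_1$ weights followed by H\"older's inequality, giving the same conclusion with $\delta = 1-1/q$ for the reverse H\"older exponent $q$; the two approaches share the same core difficulty.)

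For the first stage I would start from the equivalent form of the $A_1$ condition,
\[
\frac{1}{|B|}\int_B w(x)\,dx \;\le\; [w]_{A_1}\,\operatorname*{ess\,inf}_{x\in B} w(x)
\]
for every ball $B$ (this is exactly the first item of the Remark following the definition of $A_1$). Given $E\subset B$, writing $w(E)=w(B)-w(B\setminus E)$ and estimating $w(B\setminus E)\ge |B\setminus E|\,\operatorname*{ess\,inf}_{B}w\ge \frac{|B\setminus E|}{[w]_{A_1}|B|}\,w(B)$ yields
\[
\frac{w(E)}{w(B)} \;\le\; 1-\frac{1}{[w]_{A_1}}\Bigl(1-\frac{|E|}{|B|}\Bigr).
\]
In particular, if $|E|\le\tfrac12|B|$ then $w(E)\le \gamma\,w(B)$ with $\gamma:=1-\frac{1}{2[w]_{A_1}}\in[\tfrac12,1)$, and the same holds, up to constants depending only on $n$, with balls replaced by dyadic cubes.

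For the second stage, fix a cube $Q_0$ comparable to $B$ (passing from $B$ to such a cube costs only constants depending on $n$ and $[w]_{A_1}$, using that $A_1$ weights are doubling) and let $E\subset Q_0$ with $s:=|E|/|Q_0|$. I would build a nested sequence $E=E_0\subset E_1\subset E_2\subset\cdots$, where $E_{i+1}$ is the union of the maximal dyadic subcubes of $Q_0$ on which the density of $E_i$ exceeds a fixed threshold $\theta\approx 2^{-n}$; by maximality each selected cube then carries $E_i$-density at most $\tfrac12$ (so the first-stage estimate applies to it), while $|E_{i+1}|\le c_n\theta^{-1}|E_i|$ and $E_i\subset E_{i+1}$ modulo null sets. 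This gives $w(E_i)\le\gamma\,w(E_{i+1})$ at each step, hence $w(E)\le\gamma^k\,w(E_k)\le\gamma^k\,w(Q_0)$, and one runs the iteration for $k$ steps, as long as $|E_{k}|$ is still at most $\tfrac12|Q_0|$, which allows $k\approx\log(1/s)$ steps. Choosing $k$ as large as permitted produces $w(E)\le C\,s^{\delta}\,w(Q_0)$ with $\delta=\log(1/\gamma)/\log(c_n\theta^{-1})\in(0,1)$ and $C,\delta$ depending only on $n$ and $[w]_{A_1}$; the case of $s$ bounded below from $0$ is trivial, and transferring back from $Q_0$ to $B$ finishes the proof.

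The main obstacle is the bookkeeping in the second stage: one must calibrate the stopping density $\theta$ so that the stopped cubes really land in the regime $|E_i\cap Q|\le\tfrac12|Q|$ where the contraction $w(E_i\cap Q)\le\gamma\,w(Q)$ is available, avoid selecting $Q_0$ itself once $|E_i|$ has grown past $\tfrac12|Q_0|$, and track the growth rate of $|E_i|$ precisely enough to read off an honest exponent $\delta$. All of this is the standard reverse H\"older / $A_\infty$ machinery recorded in Chapter 7 of \cite{Duoa-Book}.
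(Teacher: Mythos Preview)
The paper does not give its own proof of this lemma; it is quoted verbatim as a classical fact from Chapter~7 of \cite{Duoa-Book} and used as a black box inside the proof of Lemma~\ref{Lemma-2}. So there is nothing in the paper to compare your argument against line by line.

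That said, your outline is a correct proof of the statement. The first stage is clean: the inequality
\[
\frac{w(E)}{w(B)} \le 1-\frac{1}{[w]_{A_1}}\Bigl(1-\frac{|E|}{|B|}\Bigr)
\]
follows exactly as you wrote, and gives the uniform contraction $w(E)\le\gamma\,w(B)$ with $\gamma=1-\tfrac{1}{2[w]_{A_1}}$ whenever $|E|\le\tfrac12|B|$. The second stage is the standard Calder\'on--Zygmund iteration, and your description of the stopping-time construction, the growth control $|E_{i+1}|\le\theta^{-1}|E_i|$, and the resulting exponent $\delta=\log(1/\gamma)/\log(c_n\theta^{-1})$ is accurate. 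The bookkeeping caveats you flag (calibrating $\theta$, avoiding selecting $Q_0$) are genuine but routine.

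For what it is worth, the presentation in \cite{Duoa-Book} proceeds via the reverse H\"older inequality route you mention as an alternative: one first proves $\bigl(\tfrac{1}{|B|}\int_B w^{1+\varepsilon}\bigr)^{1/(1+\varepsilon)}\le \tfrac{C}{|B|}\int_B w$ for some $\varepsilon>0$ depending only on $n$ and $[w]_{A_1}$, and then H\"older's inequality with exponents $1+\varepsilon$ and $(1+\varepsilon)/\varepsilon$ gives the conclusion with $\delta=\varepsilon/(1+\varepsilon)$. Your iterative argument and this reverse-H\"older argument are the two canonical proofs and are essentially equivalent in difficulty; both appear in that chapter.
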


Diening and H\"ast\"o \cite{DH-preprint}
have pointed out 
that Definition \ref{def-variable-Muckenhoupt}
does not directly imply the monotone property 
of the Muckenhoupt class $A_{p(\cdot)}$. 
In order to obtain the property they 
have generalized the Muckenhoupt class as follows:

\begin{definition}[Diening and H\"ast\"o \cite{DH-preprint}]
Suppose $p(\cdot) \in \mathcal{P}(\mathbb{R}^n)$. 
A weight $w$ is said to be an $\tilde{A}_{p(\cdot)}$ weight if 
\[
\sup_{B:\mathrm{ball}} 
|B|^{-p_B} \| w\chi_B\|_{L^1(\mathbb{R}^n)} 
\| w^{-1}\chi_B \|_{L^{p'(\cdot)/p(\cdot)}(\mathbb{R}^n)}
<\infty ,
\]
where $p_B$ is the harmonic average of $p(\cdot)$ over $B$, namely, 
\[
p_B:=\left( \frac{1}{|B|} \int_B \frac{1}{p(x)} \, dx  
\right)^{-1} .
\]
The set $\tilde{A}_{p(\cdot)}$ consists of all $\tilde{A}_{p(\cdot)}$ weights.
\end{definition}

Based on the definition $\tilde{A}_{p(\cdot)}$
Diening and H\"ast\"o 
\cite[Lemma 3.1]{DH-preprint} 
have proved the next monotone property.

\begin{theorem}
\label{Theorem-DH-monotone}
Suppose $p(\cdot) , \, q(\cdot)\in \mathcal{P}(\mathbb{R}^n)\cap LH(\mathbb{R}^n)$ 
and $p(\cdot) \le q(\cdot)$. 
Then we have
\[
A_1 \subset A_{p_{-}} \subset \tilde{A}_{p(\cdot)}
\subset \tilde{A}_{q(\cdot)}
\subset A_{q_+} \subset A_{\infty} . 
\]
\end{theorem}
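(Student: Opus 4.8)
The plan is to establish the chain of inclusions by treating its links separately, since each has a different flavour. The two outer links $A_{p_-}\subset A_\infty$ (via $A_{q_+}$) are immediate from the monotone property $A_p\subset A_q\subset A_\infty$ for constant exponents recorded in the Remark following the definition of $A_1$, so nothing new is needed there. The link $A_1\subset A_{p_-}$ is again the constant-exponent monotonicity together with $p_-\ge 1$ (and the standard fact $A_1\subset A_p$ for all $p>1$). Thus the genuine content lies in the three middle inclusions $A_{p_-}\subset\tilde A_{p(\cdot)}$, $\tilde A_{p(\cdot)}\subset\tilde A_{q(\cdot)}$, and $\tilde A_{q(\cdot)}\subset A_{q_+}$, and within these the second is the heart of the matter.

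First I would handle $A_{p_-}\subset\tilde A_{p(\cdot)}$. Given $w\in A_{p_-}$ and a ball $B$, one must bound $|B|^{-p_B}\,\|w\chi_B\|_{L^1}\,\|w^{-1}\chi_B\|_{L^{p'(\cdot)/p(\cdot)}}$. The idea is to compare the variable norm $\|w^{-1}\chi_B\|_{L^{(p'/p)(\cdot)}}$ with the constant-exponent norm $\|w^{-1}\chi_B\|_{L^{p_-'/p_-}}$ that appears in the $A_{p_-}$ condition; here the log-Hölder hypothesis $p(\cdot)\in LH(\mathbb{R}^n)$ is exactly what lets one pass between $|B|^{-p_B}$ and $|B|^{-p_-}$ and between the two Luxemburg norms, up to constants independent of $B$ (this is a standard localisation estimate: on a fixed ball, a log-Hölder exponent is comparable to each of its averages). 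The symmetric estimate $\tilde A_{q(\cdot)}\subset A_{q_+}$ is dual in spirit: one drops from $q(\cdot)$ up to the constant $q_+$, again using log-Hölder to control $|B|^{q_B}$ versus $|B|^{q_+}$ and the norms accordingly, then reorganises the three factors into the classical $A_{q_+}$ quotient.

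The main obstacle is the monotone step $\tilde A_{p(\cdot)}\subset\tilde A_{q(\cdot)}$ for $p(\cdot)\le q(\cdot)$. Here one cannot simply invoke a constant-exponent fact; instead one works directly with the defining suprema. Fix a ball $B$. The plan is to write, for each factor, a Hölder-type comparison: since $p(x)\le q(x)$ pointwise, one has $p_B\le q_B$, and one needs to trade the excess between $|B|^{-p_B}$ and $|B|^{-q_B}$ against the change in the Luxemburg exponent from $p'(\cdot)/p(\cdot)$ to $q'(\cdot)/q(\cdot)$. The key analytic input is a reverse Hölder / self-improvement property of $\tilde A_{p(\cdot)}$ weights — that membership in $\tilde A_{p(\cdot)}$ forces $w$ to lie in $\tilde A_{r(\cdot)}$ for exponents $r(\cdot)$ slightly below $p(\cdot)$, equivalently an $A_\infty$-type reverse Hölder inequality for $w$ with a uniform exponent. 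One then interpolates this reverse Hölder gain against the pointwise inequality $p(\cdot)\le q(\cdot)$ to absorb the discrepancy. Assembling the three factor-wise bounds and taking the supremum over $B$ yields $w\in\tilde A_{q(\cdot)}$. Throughout, the log-Hölder continuity of $p(\cdot)$ and $q(\cdot)$ is used to keep all constants ball-independent, and the boundedness of $M$ on $L^{p(\cdot)}$ (equivalently, the machinery behind the $A_{p(\cdot)}$ theory) underlies the reverse Hölder step; this is where I expect the argument to be most delicate and where I would, if permitted, simply cite \cite{DH-preprint} for the precise self-improvement lemma rather than reprove it.
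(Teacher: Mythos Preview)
The paper does not prove Theorem~\ref{Theorem-DH-monotone}; it is stated as a result of Diening and H\"ast\"o and simply attributed to \cite[Lemma~3.1]{DH-preprint}, with no argument supplied. There is therefore no proof in the paper to compare your plan against --- the paper's own treatment is exactly the bare citation you yourself contemplate at the end of your proposal.

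As a comment on the substance of your sketch: the outer and endpoint inclusions are handled correctly, but your plan for the central step $\tilde A_{p(\cdot)}\subset\tilde A_{q(\cdot)}$ invokes a reverse-H\"older / self-improvement mechanism, which is heavier machinery than ought to be needed. Since $p(\cdot)\le q(\cdot)$ implies $\frac{q'(\cdot)}{q(\cdot)}=\frac{1}{q(\cdot)-1}\le\frac{1}{p(\cdot)-1}=\frac{p'(\cdot)}{p(\cdot)}$, one expects $\|w^{-1}\chi_B\|_{L^{q'(\cdot)/q(\cdot)}}$ to be controlled by $\|w^{-1}\chi_B\|_{L^{p'(\cdot)/p(\cdot)}}$ via a direct variable-exponent H\"older inequality on $B$, with the resulting power of $|B|$ absorbed by the change from $|B|^{-p_B}$ to $|B|^{-q_B}$ under the log-H\"older hypothesis. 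Reverse H\"older is the tool for \emph{openness} of the classes (going strictly below $p(\cdot)$), not for monotonicity (going above); if you were to write out an argument rather than cite, this is the step to rethink.
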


Before we state the relation between 
the generalized Muckenhoupt condition and 
boundedness of the Hardy--Littlewood maximal operator, 
we define explicitly weighted Lebesgue spaces with variable exponent. 

\begin{definition}
Let $p(\cdot) \in \mathcal{P}(\mathbb{R}^n)$ and 
$w$ be a weight. 
The weighted Lebesgue space with variable exponent $L^{p(\cdot)}(w)$ 
is defined by
\[
L^{p(\cdot)}(w):=L^{p(\cdot)}(\mathbb{R}^n, \, w^{1/p(\cdot)}) .
\]
Namely the space $L^{p(\cdot)}(w)$ 
is a Banach function space equipped with the norm
\[
\| f \|_{L^{p(\cdot)}(w)}:=\| fw^{1/p(\cdot)} \|_{L^{p(\cdot)}(\mathbb{R}^n)} .
\]
\end{definition}

\begin{theorem}
\label{Theorem-Muckenhoupt-variable-exponent}
Suppose $p(\cdot) \in \mathcal{P}(\mathbb{R}^n)\cap LH(\mathbb{R}^n)$. 
Then the following three conditions are equivalent:
\begin{enumerate}
\item[(A)]
$w\in A_{p(\cdot)}$.
\item[(B)]
$w\in \tilde{A}_{p(\cdot)}$.
\item[(C)]
The Hardy--Littlewood maximal operator is bounded 
on the weighted variable Lebesgue space $L^{p(\cdot)}(w)$.
\end{enumerate}
\end{theorem}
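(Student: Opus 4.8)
\noindent
I would prove the three conditions equivalent by establishing the single equivalence (B) $\Leftrightarrow$ (C) together with the two implications (C) $\Rightarrow$ (A) and (A) $\Rightarrow$ (C); these cover every pair. The equivalence (B) $\Leftrightarrow$ (C) is nothing new: the class $\tilde{A}_{p(\cdot)}$ is exactly the Muckenhoupt class introduced by Diening and H\"ast\"o, and \cite{DH-preprint} (see also the independent work \cite{CFN2012}) shows that, for $p(\cdot)\in\mathcal{P}(\mathbb{R}^n)\cap LH(\mathbb{R}^n)$, membership in $\tilde{A}_{p(\cdot)}$ is equivalent to the boundedness of $M$ on $L^{p(\cdot)}(w)$; I would simply quote this. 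So the only genuine task is to link (A) with this pair.

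For (C) $\Rightarrow$ (A) I would argue entirely inside the Banach-function-space framework developed above. Set $X:=L^{p(\cdot)}(w)=L^{p(\cdot)}(\mathbb{R}^n,w^{1/p(\cdot)})$; the local-integrability requirements $w^{1/p(\cdot)}\in X_{\mathrm{loc}}(\mathbb{R}^n)$ and $w^{-1/p(\cdot)}\in (L^{p'(\cdot)})_{\mathrm{loc}}(\mathbb{R}^n)$ are built into the definition of $L^{p(\cdot)}(w)$ as a weighted Banach function space, so Lemma \ref{Lemma-weighted-Banach} tells us that $X$ is a Banach function space whose associate space is $L^{p'(\cdot)}(\mathbb{R}^n,w^{-1/p(\cdot)})$, with $\|\chi_B\|_{X}=\|w^{1/p(\cdot)}\chi_B\|_{L^{p(\cdot)}(\mathbb{R}^n)}$ and $\|\chi_B\|_{X'}=\|w^{-1/p(\cdot)}\chi_B\|_{L^{p'(\cdot)}(\mathbb{R}^n)}$ for every ball $B$. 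Condition (C) says $M$ is bounded on $X$; by Remark \ref{Remark-Banach} the weak-type inequality (\ref{weakly bounded}) holds for $X$, and then Lemma \ref{Lemma-Izuki-Banach} yields
\[
\sup_{B:\mathrm{ball}}\frac{1}{|B|}\,\|w^{1/p(\cdot)}\chi_B\|_{L^{p(\cdot)}(\mathbb{R}^n)}\,\|w^{-1/p(\cdot)}\chi_B\|_{L^{p'(\cdot)}(\mathbb{R}^n)}<\infty,
\]
which is precisely the statement $w\in A_{p(\cdot)}$ of Definition \ref{def-variable-Muckenhoupt}.

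It remains to prove (A) $\Rightarrow$ (C) (equivalently (A) $\Rightarrow$ (B)), which is where the real work lies. One route is to invoke the weighted maximal inequality of Cruz-Uribe, Fiorenza and Neugebauer \cite{CFN2012} after reconciling normalizations: our space $L^{p(\cdot)}(w)$ is exactly the variable Lebesgue space $L^{p(\cdot)}$ weighted by the multiplier $w^{1/p(\cdot)}$, so the $A_{p(\cdot)}$ condition of Definition \ref{def-variable-Muckenhoupt} for $w$ is, after this substitution, the (differently normalized) $A_{p(\cdot)}$ condition of \cite{CFN2012} applied to the weight $w^{1/p(\cdot)}$; their theorem then gives the boundedness of $M$ on $L^{p(\cdot)}(w)$, i.e. (C), and hence (B) by the equivalence already quoted from \cite{DH-preprint}. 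A self-contained alternative is to pass directly between the $A_{p(\cdot)}$ quantity and the $\tilde{A}_{p(\cdot)}$ quantity ball by ball: using the global log-H\"older continuity of $p(\cdot)$ one compares $\|w^{1/p(\cdot)}\chi_B\|_{L^{p(\cdot)}}$ with $\bigl(\int_B w\bigr)^{1/p_B}$ and $\|w^{-1/p(\cdot)}\chi_B\|_{L^{p'(\cdot)}}$ with $\|w^{-1}\chi_B\|_{L^{p'(\cdot)/p(\cdot)}}^{1/p_B}$, the comparison constants depending only on $n$ and the log-H\"older data of $p(\cdot)$ (Diening's key estimate), and then raises the $A_{p(\cdot)}$ inequality to the power $p_B$.

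The main obstacle is exactly this last comparison. When $p(\cdot)\equiv p$ is constant, raising the $A_{p(\cdot)}$ quantity to the power $p$ gives \emph{literally} the classical $A_p$-constant form of the $\tilde{A}_{p(\cdot)}$ quantity, so there the two conditions coincide by an algebraic identity; for a genuinely variable $p(\cdot)$ the two quantities are no longer tied in this way, and trading $p(\cdot)$ for its harmonic mean $p_B$ on each ball — while still keeping $p_B$, and not merely the crude exponents $p_-$ or $p_+$ that the generic modular-versus-norm estimates would produce — is the delicate point that forces one to use the log-H\"older hypothesis in full strength. Everything else — the reductions among (A), (B), (C), the identification of the associate space $X'$, and the bookkeeping with characteristic functions of balls — is routine.
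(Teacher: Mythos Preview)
Your proposal is correct, and in fact goes further than the paper does. The paper does not supply a proof of this theorem at all: immediately after the statement it simply records that Cruz-Uribe, Fiorenza and Neugebauer \cite{CFN2012} proved $(A)\Leftrightarrow(C)$ and that Diening and H\"ast\"o \cite{DH-preprint} proved $(B)\Leftrightarrow(C)$, and leaves it at that. You cite the same two sources for the same two equivalences, so at the level of attribution your argument and the paper's coincide. Where you differ is that you additionally give a self-contained derivation of $(C)\Rightarrow(A)$ using the paper's own Banach-function-space machinery (Lemma~\ref{Lemma-weighted-Banach}, Remark~\ref{Remark-Banach}, Lemma~\ref{Lemma-Izuki-Banach}); this is a genuine bonus, since it shows that one direction of the CFN2012 equivalence is already an immediate consequence of the abstract lemmas the paper has set up, and it would let a reader verify that implication without consulting \cite{CFN2012}. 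Your discussion of the remaining implication $(A)\Rightarrow(C)$ correctly identifies the hard step and the relevant reference; the paper makes no attempt to elaborate on it either.
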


Cruz-Uribe,  Fiorenza and Neugebauer \cite{CFN2012}
have proved $(A)\Leftrightarrow (C)$.
On the other hand, 
Diening and H\"ast\"o \cite{DH-preprint} 
have proved $(B)\Leftrightarrow (C)$.
By Theorem \ref{Theorem-Muckenhoupt-variable-exponent} we can
identify $A_{p(\cdot)}$ and $\tilde{A}_{p(\cdot)}$, 
provided that $p(\cdot) \in \mathcal{P}(\mathbb{R}^n)\cap LH(\mathbb{R}^n)$. 
Moreover Theorem \ref{Theorem-DH-monotone} gives us
the following monotone property.

\begin{corollary}
Suppose $p(\cdot) , \, q(\cdot)\in \mathcal{P}(\mathbb{R}^n)\cap LH(\mathbb{R}^n)$ 
and $p(\cdot) \le q(\cdot)$. 
Then we have
\[
A_1 \subset A_{p_{-}} \subset A_{p(\cdot)}
\subset A_{q(\cdot)}
\subset A_{q_+} \subset A_{\infty} . 
\]
\end{corollary}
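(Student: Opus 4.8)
The plan is to obtain this corollary as an immediate consequence of Theorem \ref{Theorem-DH-monotone} together with the identification $A_{p(\cdot)}=\tilde{A}_{p(\cdot)}$ furnished by Theorem \ref{Theorem-Muckenhoupt-variable-exponent}. Concretely, since $p(\cdot)\in\mathcal{P}(\mathbb{R}^n)\cap LH(\mathbb{R}^n)$, the equivalence $(A)\Leftrightarrow(B)$ in Theorem \ref{Theorem-Muckenhoupt-variable-exponent} says precisely that a weight $w$ belongs to $A_{p(\cdot)}$ if and only if it belongs to $\tilde{A}_{p(\cdot)}$; hence $A_{p(\cdot)}=\tilde{A}_{p(\cdot)}$ as sets of weights. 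The same reasoning applied to $q(\cdot)$, which also lies in $\mathcal{P}(\mathbb{R}^n)\cap LH(\mathbb{R}^n)$, gives $A_{q(\cdot)}=\tilde{A}_{q(\cdot)}$.

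With these two identifications in hand, I would simply rewrite the chain supplied by Theorem \ref{Theorem-DH-monotone}, namely
\[
A_1 \subset A_{p_{-}} \subset \tilde{A}_{p(\cdot)} \subset \tilde{A}_{q(\cdot)} \subset A_{q_+} \subset A_{\infty},
\]
replacing $\tilde{A}_{p(\cdot)}$ by $A_{p(\cdot)}$ and $\tilde{A}_{q(\cdot)}$ by $A_{q(\cdot)}$, which yields exactly the asserted inclusions
\[
A_1 \subset A_{p_{-}} \subset A_{p(\cdot)} \subset A_{q(\cdot)} \subset A_{q_+} \subset A_{\infty}.
\]
Each individual inclusion is then either one of the classical monotone inclusions among constant-exponent Muckenhoupt classes (for the two outermost steps) or a direct translation of a step in Theorem \ref{Theorem-DH-monotone}.

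Honestly, there is essentially no obstacle here beyond bookkeeping: the only point requiring care is checking that the hypotheses of Theorem \ref{Theorem-Muckenhoupt-variable-exponent} are met for both exponents, and this is exactly what the assumption $p(\cdot),q(\cdot)\in\mathcal{P}(\mathbb{R}^n)\cap LH(\mathbb{R}^n)$ provides. One could in principle try to prove the central inclusion $A_{p(\cdot)}\subset A_{q(\cdot)}$ directly from Definition \ref{def-variable-Muckenhoupt}, but comparing the $L^{p(\cdot)}$ and $L^{q(\cdot)}$ norms of $w^{1/p(\cdot)}\chi_B$ and $w^{\pm1/p(\cdot)}\chi_B$ over arbitrary balls is delicate precisely because $p(\cdot)$ is merely $\le q(\cdot)$ (there is no clean pointwise embedding of variable Lebesgue spaces over $\mathbb{R}^n$). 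Routing the argument through the $\tilde{A}$-classes, where the monotonicity is already established, bypasses this entirely, so the route through Theorems \ref{Theorem-DH-monotone} and \ref{Theorem-Muckenhoupt-variable-exponent} is the one I would follow.
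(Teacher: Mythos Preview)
Your proposal is correct and follows exactly the paper's own argument: the paper states explicitly that Theorem~\ref{Theorem-Muckenhoupt-variable-exponent} allows one to identify $A_{p(\cdot)}$ with $\tilde{A}_{p(\cdot)}$ under the hypothesis $p(\cdot)\in\mathcal{P}(\mathbb{R}^n)\cap LH(\mathbb{R}^n)$, and then invokes Theorem~\ref{Theorem-DH-monotone} to obtain the chain of inclusions. There is nothing to add or change.
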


\begin{remark}
Based on 
Diening and H\"ast\"o 
\cite[Proposition 3.1]{DH-preprint}, 
we can construct weights belonging to 
$A_{p(\cdot)}$ as follows: 
Let $p(\cdot)\in \mathcal{P}(\mathbb{R}^n)\cap LH(\mathbb{R}^n)$ and 
$w_1, \, w_2\in A_1$.
Then we have that $w_1 w_2^{1-p(\cdot)} \in A_{p(\cdot)}$.
\end{remark}



\section{Main result}

\subsection{Definition of the intrinsic function and Herz spaces}

\noindent 

We first define  the 
intrinsic square function $S_{\beta}f(x)$.

\begin{definition}
Given a point $x\in \mathbb{R}^n$, we define a set
\[
\Gamma(x):= \{ (y,t)\in \mathbb{R}^{n+1}_+ \, : \, |x-y|<t \} ,
\]
where $\mathbb{R}^{n+1}_+=\mathbb{R}^n \times (0,\infty)$.
Let $0<\beta \le 1$ be a constant.
The set $\mathcal{C}_{\beta}$ consists of all functions $\varphi$
defined on $\mathbb{R}^n$ such that
\begin{enumerate}
\item $\mathrm{supp} \varphi \subset \{ |x|\le 1 \}$, 
\item $\int_{\mathbb{R}^n} \varphi(x)\, dx=0$,
\item $|\varphi(x)-\varphi(x')|\le |x-x'|^{\beta}$ for $x, \, x'\in \mathbb{R}^n$.
\end{enumerate}
For every $(y,t)\in \mathbb{R}^{n+1}_+$ we write 
$\varphi_t(y)=t^{-n}\varphi\left( \frac{y}{t}\right)$.
Then we define a maximal function for $f\in L^1_{\mathrm{loc}}(\mathbb{R}^n)$, 
\[
A_{\beta}f(y,t):=\sup_{\varphi \in \mathcal{C}_{\beta}} |f*\varphi_t(y)|
\quad \left( (y,t)\in \mathbb{R}^{n+1}_+ \right) .
\]
Using above, we define the 
intrinsic square function with order $\beta$ by 
\[
S_{\beta}f(x):=\left(
\int \! \int_{\Gamma(x)} 
A_{\beta}f(y,t)^2 \, \frac{dy\, dt}{t^{n+1}}
\right)^{1/2} .
\]
\end{definition}

In order to define weighted Herz spaces with variable exponent, 
we use a local weighted Lebesgue spaces with variable exponent.

\begin{definition}
Let $\Omega \subset \mathbb{R}^n$ be a measurable set,
$p(\cdot) \, : \, \Omega \to [1,\infty)$  a measurable function
 and 
$w$  a positive and locally integrable function defined on $\Omega$. 
The set $L^{p(\cdot)}_{\mathrm{loc}}(\Omega ,w^{1/p(\cdot)})$ consists of all functions $f$
satisfying the following condition: 
for all measurable subsets $E\subset \Omega$
there exists a constant $\lambda >0$ such that
\[
\int_E \left|  \frac{f(x)}{\lambda} \right|^{p(x)} w(x)\, dx<\infty .
\]
\end{definition}

We additionally use the following notation.

\begin{enumerate}
\item
For every integer $k$, we write 
$B_k:=\{ |x|\le 2^k \}$, 
$D_k:=B_k \, \backslash \, B_{k-1}$ and $\chi_k:=\chi_{D_k}$.
\item
For every non-negative integer $m$, we write
$C_m:=D_m$ if $m \ge 1$ and $C_0:=B_0$. 
\end{enumerate}

Now we are ready to define the Herz spaces.

\begin{definition}
Let $\alpha \in \mathbb{R}$, $0<q<\infty$, $p(\cdot)\in \mathcal{P}(\mathbb{R}^n)$ and
$w$ be a weight. 
\begin{enumerate}
\item
The homogeneous weighted Herz space $\dot{K}^{\alpha,q}_{p(\cdot)}(w)$ 
with variable exponent is defined by
\[
\dot{K}^{\alpha,q}_{p(\cdot)}(w):=\{   
f\in L^{p(\cdot)}_{\mathrm{loc}}(\mathbb{R}^n \, \backslash \, \{ 0 \} ,w^{1/p(\cdot)})
\, : \, 
\| f \|_{\dot{K}^{\alpha,q}_{p(\cdot)}(w)}<\infty
\}  ,
\]
where 
\[
\| f \|_{\dot{K}^{\alpha,q}_{p(\cdot)}(w)}
:=\left( \sum_{k=-\infty}^{\infty} 2^{\alpha k q}
\| f\chi_k \|_{L^{p(\cdot)}(w)}^q
\right)^{1/q} .
\]

\item
The non-homogeneous weighted Herz space $K^{\alpha,q}_{p(\cdot)}(w)$ 
with variable exponent is defined by
\[
K^{\alpha,q}_{p(\cdot)}(w):=\{   
f\in L^{p(\cdot)}_{\mathrm{loc}}(\mathbb{R}^n  ,w^{1/p(\cdot)})
\, : \, 
\| f \|_{K^{\alpha,q}_{p(\cdot)}(w)}<\infty
\}  ,
\]
where 
\[
\| f \|_{K^{\alpha,q}_{p(\cdot)}(w)}
:=\left( \sum_{m=0}^{\infty} 2^{\alpha m q}
\| f\chi_{C_m} \|_{L^{p(\cdot)}(w)}^q
\right)^{1/q} .
\]

\end{enumerate}
\end{definition}

\subsection{Key lemmas}

\noindent

Lemmas \ref{Lemma-1} and \ref{Lemma-2} below 
have been proved by the first author \cite[Proposition 2.4]{IzukiAnalMath2010}
in the case $X=L^{p(\cdot)}(\mathbb{R}^n)$. 
His proof of Lemma \ref{Lemma-2} is due to Diening's work \cite{Diening2005}.
Recently a self-contained proof based on the Rubio de Francia 
algorithm \cite{Rubio1,Rubio2,Rubio3} 
has given by 
Cruz-Uribe, Hern\'andez and Martell \cite[Proof of Lemma 3.3]{CHM}.
Based on \cite{IzukiAnalMath2010,CHM} 
we will give the complete proofs of those lemmas.

\begin{lemma}
\label{Lemma-1}
Let $X$ be a Banach function space. 
Suppose that the Hardy--Littlewood maximal operator $M$
is weakly bounded on $X$. 
Then we have that for all balls $B\subset \mathbb{R}^n$ 
and all measurable sets $E\subset B$, 
\begin{equation}
\frac{|E|}{|B|}
\le C\,
\frac{\| \chi _E \|_X}{\| \chi_B \|_X}  .
\label{equ-lemma1}
\end{equation}
\end{lemma}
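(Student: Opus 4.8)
The plan is to deduce the geometric inequality (\ref{equ-lemma1}) from the weak-type bound (\ref{weakly bounded}) applied to the single function $f=\chi_E$, after establishing a uniform pointwise lower bound for $M\chi_E$ on the ball $B$. If $|E|=0$ the claim is trivial, so I may assume $|E|>0$.

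First, fix a ball $B=B(x_0,r)$ and a measurable set $E\subset B$. Since $|E|\le|B|<\infty$, the last defining property of a Banach function space gives $\|\chi_E\|_X<\infty$, so (\ref{weakly bounded}) may legitimately be applied to $\chi_E$. For every point $x\in B$ the triangle inequality shows $B\subset B(x,2r)$, hence $E\subset B(x,2r)$, and therefore
\[
M\chi_E(x)\ \ge\ \frac{1}{|B(x,2r)|}\int_{B(x,2r)}\chi_E(y)\,dy\ =\ \frac{|E\cap B(x,2r)|}{|B(x,2r)|}\ =\ \frac{|E|}{2^{n}|B|}.
\]
This lower bound is independent of $x\in B$.

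Next, choose the level $\lambda:=\dfrac{|E|}{2^{n+1}|B|}>0$, which is strictly smaller than the pointwise bound just obtained; consequently $B\subset\{\,x\in\mathbb{R}^n:M\chi_E(x)>\lambda\,\}$. By the lattice property of $\|\cdot\|_X$ we get $\|\chi_B\|_X\le\|\chi_{\{M\chi_E>\lambda\}}\|_X$, and then (\ref{weakly bounded}) with $f=\chi_E$ yields
\[
\|\chi_B\|_X\ \le\ \|\chi_{\{M\chi_E>\lambda\}}\|_X\ \le\ C\,\lambda^{-1}\,\|\chi_E\|_X\ =\ C\,2^{n+1}\,\frac{|B|}{|E|}\,\|\chi_E\|_X .
\]
Rearranging and absorbing the factor $2^{n+1}$ into the constant $C$ gives exactly (\ref{equ-lemma1}).

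I do not anticipate a genuine obstacle. The only points needing a little care are (i) verifying $\chi_E\in X$ so that the hypothesis is applicable, which is where property (f) of a Banach function space is used, and (ii) the fact that the level set $\{M\chi_E>\lambda\}$ is defined by a strict inequality, which is precisely why $\lambda$ is taken strictly below $|E|/(2^{n}|B|)$ rather than equal to it; measurability of this set follows from the lower semicontinuity of $M\chi_E$.
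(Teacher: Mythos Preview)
Your proof is correct and follows essentially the same strategy as the paper's: show that $M\chi_E$ is bounded below on $B$ by a constant multiple of $|E|/|B|$, then apply the weak-type inequality (\ref{weakly bounded}) at a level $\lambda$ strictly below that bound and rearrange. The only cosmetic difference is that the paper uses the uncentered definition of $M$ to take the ball $B$ itself (obtaining $M\chi_E(x)\ge|E|/|B|$ for $x\in B$) and then lets $\lambda\nearrow|E|/|B|$, whereas you double the radius and fix $\lambda=|E|/(2^{n+1}|B|)$.
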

\begin{proof}
Take a ball $B$, a measurable set $E\subset B$ 
and a number $0<\lambda<|E|/|B|$ arbitrarily. 
Then we see that $M(\chi_E)(x)>\lambda$ for almost every $x\in B$. 
Hence we have
\[
\| \chi_B \|_X \le  \| \chi_{\{  M(\chi_E)>\lambda\} }\|_X
\le C\, \lambda^{-1} \, \| \chi_E \|_X .
\]
Therefore we get inequality (\ref{equ-lemma1}) because 
$0<\lambda<|E|/|B|$ is arbitrary.
\end{proof}

\begin{lemma}
\label{Lemma-2}
Let $X$ be a Banach function space. Suppose that 
$M$ is bounded on the associate space $X'$. 
Then there exists  a constant $0<\delta <1$ such that for all balls $B\subset \mathbb{R}^n$ 
and all measurable sets $E\subset B$, 
\begin{equation}
\frac{\| \chi _E \|_X}{\| \chi_B \|_X}  
\le C\,
\left(
\frac{|E|}{|B|}
\right)^{\delta}.
\label{equ-lemma2}
\end{equation}
\end{lemma}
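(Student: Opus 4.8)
The goal is the reverse inequality to Lemma~\ref{Lemma-1}, and the natural route is duality via the associate space $X'$. The key observation is that the quantity $\|\chi_E\|_X / \|\chi_B\|_X$ should be controlled by applying Lemma~\ref{Lemma-1}, or rather its analogue, in the associate space $X'$, where by hypothesis $M$ is (strongly, hence weakly) bounded. So first I would record the generalized H\"older inequality (Lemma~\ref{Lemma-Banach-associate}(3)) in the form
\[
\|\chi_E\|_X \cdot \|\chi_B\|_{X'} \ge \int_{\mathbb{R}^n} \chi_E(x)\chi_B(x)\,dx = |E|,
\]
using $E \subset B$; and symmetrically $|B| \le \|\chi_B\|_X \|\chi_B\|_{X'}$ — but that last one goes the wrong way, so instead I want the complementary fact that $M$ bounded on $X'$ forces $\|\chi_B\|_X \|\chi_B\|_{X'} \le C|B|$, which is exactly Lemma~\ref{Lemma-Izuki-Banach} applied to $X'$ together with $(X')' = X$ (Lemma~\ref{Lemma-Banach-associate}(2)), i.e.\ the content recorded in Remark~\ref{Remark-Banach}.

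**Assembling the pieces.** With those two facts in hand the argument is essentially a chain of inequalities. From $|E| \le \|\chi_E\|_X \|\chi_B\|_{X'}$ I get $\|\chi_E\|_X \ge |E| / \|\chi_B\|_{X'}$, but I actually need an \emph{upper} bound on $\|\chi_E\|_X$, so I should instead run Lemma~\ref{Lemma-1} in $X'$: since $M$ is weakly bounded on $X'$,
\[
\frac{|E|}{|B|} \le C\,\frac{\|\chi_E\|_{X'}}{\|\chi_B\|_{X'}}.
\]
Now combine: write
\[
\frac{\|\chi_E\|_X}{\|\chi_B\|_X}
= \frac{\|\chi_E\|_X \|\chi_E\|_{X'}}{\|\chi_B\|_X} \cdot \frac{1}{\|\chi_E\|_{X'}},
\]
bound $\|\chi_E\|_X \|\chi_E\|_{X'} \le C|E|$ by Lemma~\ref{Lemma-Izuki-Banach} (applied with $E$ in place of a ball — here one needs the version of that lemma stated for arbitrary measurable sets, or one uses that $\|\chi_E\|_X \le \|\chi_{B}\|_X$ and a ball argument), bound $\|\chi_B\|_X \ge C|B|/\|\chi_B\|_{X'}$, and bound $\|\chi_E\|_{X'} \ge C |E|\,|B|^{-1}\|\chi_B\|_{X'}$ from the displayed inequality above. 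Substituting all three gives $\|\chi_E\|_X/\|\chi_B\|_X \le C\,|E|\,|E|^{-1}|B|\,\|\chi_B\|_{X'}^{-1}\cdot |E|^{-1}\,|B|\,\|\chi_B\|_{X'}^{-1}$, which is the wrong shape — so the exponent $\delta$ must come from something stronger than the crude $\|\chi_E\|_X\|\chi_E\|_{X'}\le C|E|$.

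**The real mechanism, and the main obstacle.** The genuine source of the gain $\delta < 1$ is the reverse-doubling / self-improvement that $M$-boundedness on $X'$ provides, exactly paralleling Lemma~\ref{Lemma-Muckenhoupt} for $A_1$ weights. The cleanest path, following \cite{Diening2005,CHM}, is: the hypothesis ``$M$ bounded on $X'$'' is equivalent (by the Rubio de Francia extrapolation / the fact that $M$ bounded on $X'$ implies $M$ bounded on $X'$ with a ``small'' constant after passing to a power, or via Lemma~\ref{Lemma-Izuki-Banach} combined with the $A_\infty$-type property) to the statement that $\|\cdot\|_{X'}$ satisfies an $A_1$-like estimate, which yields a $\delta_0 > 0$ with $\|\chi_E\|_{X'}/\|\chi_B\|_{X'} \le C(|E|/|B|)^{\delta_0}$; then duality through $\|\chi_E\|_X \approx |E|/\|\chi_E\|_{X'}$ converts this into $\|\chi_E\|_X/\|\chi_B\|_X \le C(|E|/|B|)^{1-\delta_0}$, and one sets $\delta := 1 - \delta_0 \in (0,1)$ (shrinking if necessary). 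I expect the main obstacle to be making precise the step ``$M$ bounded on $X'$ $\Rightarrow$ a power-type gain in $X'$'': this is where one must invoke the Rubio de Francia algorithm to build, for each ball $B$, a weight $w \in A_1$ with $\chi_B \le w$, $\|w\|_X \lesssim \|\chi_B\|_X$ (or the $X'$ analogue), and then use the genuine $A_1$ estimate of Lemma~\ref{Lemma-Muckenhoupt} on $w$; transferring that pointwise/weighted estimate back to the $X'$-norm and then dualizing to $X$ is the delicate bookkeeping. Everything else is the routine combination of the generalized H\"older inequality, the Lorentz--Luxemburg theorem, and Lemma~\ref{Lemma-Izuki-Banach}.
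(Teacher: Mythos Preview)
Your identification of the three ingredients --- the Rubio de Francia algorithm, the $A_1$ estimate of Lemma~\ref{Lemma-Muckenhoupt}, and duality --- is correct and matches the paper. But the assembly you propose has a genuine gap.

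The step ``duality through $\|\chi_E\|_X \approx |E|/\|\chi_E\|_{X'}$'' is not available for an arbitrary measurable $E\subset B$. H\"older gives only $\|\chi_E\|_X \ge |E|/\|\chi_E\|_{X'}$, which is the wrong direction; the reverse inequality (Lemma~\ref{Lemma-Izuki-Banach}) is stated only for balls. So even if you had $\|\chi_E\|_{X'}/\|\chi_B\|_{X'} \le C(|E|/|B|)^{\delta_0}$, you could not convert it into the desired $X$-estimate this way. Worse, establishing that $X'$-inequality by the same mechanism would require $M$ bounded on $(X')'=X$, which is not part of the hypothesis --- so the route ``prove it in $X'$ first, then dualize'' is circular.

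The paper's resolution is to apply the Rubio de Francia construction not to $\chi_B$ but to an \emph{arbitrary} $g\in X'$ with $\|g\|_{X'}\le 1$. With $A:=\|M\|_{X'\to X'}$ one sets $Rg:=\sum_{k\ge 0}(2A)^{-k}M^kg$, so that $|g|\le Rg$, $\|Rg\|_{X'}\le 2$, and $Rg\in A_1$ with $[Rg]_{A_1}\le 2A$ \emph{uniformly in $g$}. Lemma~\ref{Lemma-Muckenhoupt} then yields $Rg(E)\le C(|E|/|B|)^{\delta}\,Rg(B)$ with $\delta$ depending only on $n$ and $A$; and generalized H\"older gives $Rg(B)\le \|\chi_B\|_X\|Rg\|_{X'}\le 2\|\chi_B\|_X$. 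Hence
\[
\int_{\mathbb{R}^n}\chi_E(x)|g(x)|\,dx \le Rg(E)\le C\left(\frac{|E|}{|B|}\right)^{\delta}\|\chi_B\|_X,
\]
and taking the supremum over all such $g$ gives $\|\chi_E\|_X\le C(|E|/|B|)^{\delta}\|\chi_B\|_X$ by the Lorentz--Luxemburg theorem. The essential point you missed is that the $A_1$ weight must be built to dominate the dual test function $g$, not $\chi_B$; this is precisely what turns the duality pairing into a usable upper bound on $\|\chi_E\|_X$ rather than an inequality in the wrong direction.
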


\begin{proof}
Let $A:= \| M \|_{X' \to X'}$ and 
define a function 
\begin{equation}
Rg(x):= \sum_{k=0}^{\infty} \frac{M^kg(x)}{(2A)^k} \quad (g\in X')
,
\label{define Rubio}
\end{equation}
where
\[
M^kg:= \begin{cases}
|g| & (k=0), \\
Mg & (k=1), \\
M(M^{k-1}g) & (k\ge 2).
\end{cases}
\]
For every $g\in X$, the function $Rg$ satisfies the following properties:
\begin{enumerate}
\item
$|g(x)| \le Rg(x)$ for almost every $x\in \mathbb{R}^n$.
\item
$\| Rg \|_{X'}\le 2\| g\|_{X'}$, namely the operator $R$ is bounded on $X'$.
\item
$M(Rg)(x) \le 2A Rg(x)$, that is, $Rg$ is a Muckenhoupt $A_1$ weight 
such that $[Rg]_{A_1}\le 2A$. 
\end{enumerate}
Thus by applying Lemma \ref{Lemma-Muckenhoupt} to $Rg$,  
we can take positive constants $C$ and $\delta <1$ 
so that for all balls $B$ and all measurable sets $E\subset B$, 
\[
\frac{Rg(E)}{Rg(B)} \le C\left( \frac{|E|}{|B|} \right)^{\delta} .
\]
Now we fix $g\in X'$ with $\| g \|_{X'} \le 1$ arbitrarily. 
By virtue of generalized H\"older's inequality we have
\begin{eqnarray*}
\int_{  \mathbb{R}^n }
|\chi_E(x) g(x)| \, dx
&\le &
Rg(E)  \\
&\le &
C \left( \frac{|E|}{|B|} \right)^{\delta}
\cdot Rg(B)
\\
&\le &
C \left( \frac{|E|}{|B|} \right)^{\delta}
\cdot \| \chi_B \|_{X} \| Rg \|_{X'}
\\
&\le &
C \left( \frac{|E|}{|B|} \right)^{\delta}
 \| \chi_B \|_{X} .
\end{eqnarray*}
Therefore by the duality we get 
\begin{eqnarray*}
\| \chi_E \|_{X}
& \le & 
C\, \sup_g \left\{ \left|  
\int_{  \mathbb{R}^n }
\chi_E(x) g(x) \, dx \right|  
\, : \, g\in X', \ \| g \|_{X'} \le 1
\right\} 
\\
&\le &
C \left( \frac{|E|}{|B|} \right)^{\delta}
 \| \chi_B \|_{X} .
\end{eqnarray*}
This completes the proof of the lemma.
\end{proof}

Wilson \cite{Wilson} has proved 
the following boundedness of the square function on weighted Lebesgue spaces.

\begin{theorem}
\label{Theorem-Wilson}
Let $0<\beta \le 1$, $1<p<\infty$ and $w\in A_p$. 
Then the square function $S_\beta$ is bounded on the weighted 
Lebesgue space $L^p(w)$.
\end{theorem}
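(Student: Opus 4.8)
The plan is to follow the classical three-stage route for weighted square-function estimates: an unweighted $L^2$ bound obtained by Fourier-analytic means, a good-$\lambda$ inequality comparing $S_\beta f$ with the Hardy--Littlewood maximal function, and finally the Muckenhoupt weighted maximal estimate to close the argument for $w\in A_p$.

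First I would record the uniform Fourier decay available on the family $\mathcal{C}_\beta$. Since every $\varphi\in\mathcal{C}_\beta$ is supported in the unit ball, has mean zero, and has $\beta$-Hölder seminorm at most $1$, one checks that $\|\varphi\|_1\le C_{n,\beta}$ and
\[
|\widehat{\varphi}(\xi)|\le C_{n,\beta}\min\bigl(|\xi|,\,|\xi|^{-\beta}\bigr)\qquad(\xi\in\mathbb{R}^n),
\]
with $C_{n,\beta}$ independent of the particular $\varphi$ --- the bound near the origin coming from the cancellation $\int\varphi=0$, the bound at infinity from the Hölder modulus of continuity. Using a Calderón-type reproducing formula $f=\int_0^\infty f\ast\psi_s\ast\psi_s\,\tfrac{ds}{s}$ for a fixed radial Schwartz bump $\psi$ with $\int\psi=0$, one writes $f\ast\varphi_t(y)=\int_0^\infty (f\ast\psi_s)\ast(\psi_s\ast\varphi_t)(y)\,\tfrac{ds}{s}$ and estimates the kernel $\psi_s\ast\varphi_t$ pointwise by a normalized bump at scale $\max(s,t)$ with a factor $\min\bigl((s/t)^{\beta},(s/t)^{-1}\bigr)$ that is summable against $\tfrac{ds}{s}$ and uniform over $\varphi\in\mathcal{C}_\beta$. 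After a Cauchy--Schwarz step this dominates $A_\beta f(y,t)$ pointwise by a fixed continuous square-function expression built from $\psi$, and then, using $\int_{\mathbb{R}^n}\chi_{\Gamma(x)}(y,t)\,dx=c_n t^n$,
\[
\|S_\beta f\|_{L^2}^2=c_n\iint_{\mathbb{R}^{n+1}_+}A_\beta f(y,t)^2\,\frac{dy\,dt}{t}\le C\iint_{\mathbb{R}^{n+1}_+}|f\ast\psi_s(y)|^2\,\frac{dy\,ds}{s}=C\|f\|_{L^2}^2
\]
by Plancherel, since $\int_0^\infty|\widehat{\psi}(s)|^2\,\tfrac{ds}{s}<\infty$.

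Next I would prove a good-$\lambda$ inequality: for every $w\in A_\infty$ there are $C>0$ and $\delta>0$ so that
\[
w\bigl(\{x:S_\beta f(x)>2\lambda,\ Mf(x)\le\gamma\lambda\}\bigr)\le C\gamma^{2\delta}\,w\bigl(\{x:S_\beta f(x)>\lambda\}\bigr)
\]
for all $\lambda>0$ and all sufficiently small $\gamma>0$. Fixing a Whitney cube $Q$ of the open set $\{S_\beta f>\lambda\}$, one splits the cone integral defining $S_\beta f(x)$ for $x\in Q$ into the part with $t\gtrsim\ell(Q)$ and the localized part with $t$ small. The large-$t$ part is controlled by $\lambda$ by comparing with a point of $Q$ where $S_\beta f\le\lambda$, the difference between the two cone vertices being absorbed by $\gamma\lambda$ via $Mf$ (here again the normalization of $\mathcal{C}_\beta$ must be used, with constants uniform in $\varphi$). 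The small-$t$ localized part, applied to $f\chi_{3Q}$, is estimated in $L^2$ by the unweighted bound of the previous stage, giving $|\{x\in Q:S_\beta(f\chi_{3Q})(x)>c\gamma\lambda\}|\le C\gamma^2|Q|$; the $A_\infty$ property of $w$ --- which yields a measure comparison of the type in Lemma \ref{Lemma-Muckenhoupt} --- converts this into the asserted weighted bound. Integrating the good-$\lambda$ inequality in $\lambda$ in the usual way gives, for $w\in A_\infty$ and $0<p<\infty$, the estimate $\|S_\beta f\|_{L^p(w)}\le C\|Mf\|_{L^p(w)}$, valid once the left side is known a priori to be finite; that qualitative point is arranged by first running the argument with the truncated square functions $S_\beta^{\varepsilon,R}$ for $f$ bounded and compactly supported and then passing to the limit by monotone convergence.

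Finally, for $1<p<\infty$ and $w\in A_p$ one has $w\in A_\infty$ and also, by Muckenhoupt's theorem, $M$ bounded on $L^p(w)$, so $\|S_\beta f\|_{L^p(w)}\le C\|Mf\|_{L^p(w)}\le C\|f\|_{L^p(w)}$, which is the claim. I expect the main obstacle to be the good-$\lambda$ step, and within it the uniform treatment of the supremum over the infinite family $\mathcal{C}_\beta$: one must show that the ``far'' portion of the cone integral, taken over all $\varphi\in\mathcal{C}_\beta$ simultaneously, is genuinely dominated by $Mf$ with a constant independent of $\varphi$, and this is exactly where the reproducing-formula reduction of the first stage pays off by making all the relevant kernel bounds uniform. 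An alternative that avoids the good-$\lambda$ argument is to prove $S_\beta$ bounded on $L^2(w)$ for every $w\in A_2$ directly --- after the reduction, $\{f\mapsto f\ast\varphi_t\}_{\varphi\in\mathcal{C}_\beta}$ is dominated by a vector-valued Calderón--Zygmund operator --- and then to reach all $p\in(1,\infty)$ and $w\in A_p$ by Rubio de Francia extrapolation.
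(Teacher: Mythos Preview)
The paper does not prove Theorem~\ref{Theorem-Wilson} at all: it is quoted as a known result of Wilson \cite{Wilson} and used as a black box (in combination with the extrapolation Theorem~\ref{Theorem-weighted-extrapolation}) to obtain Corollary~\ref{Corollary-Wilson}. So there is nothing in the paper to compare your argument against.

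Your outline is essentially the standard route that Wilson himself follows --- an unweighted $L^2$ estimate via the uniform size and smoothness of the family $\mathcal{C}_\beta$, a good-$\lambda$ inequality $w(\{S_\beta f>2\lambda,\ Mf\le\gamma\lambda\})\le C\gamma^{2\delta}w(\{S_\beta f>\lambda\})$ valid for $w\in A_\infty$, and then the Muckenhoupt maximal theorem. Two small remarks. First, the Fourier-side bound $|\widehat{\varphi}(\xi)|\lesssim\min(|\xi|,|\xi|^{-\beta})$ is correct but not strictly needed: Wilson's argument is largely real-variable, and the key point in the good-$\lambda$ step is the pointwise domination $A_\beta f(y,t)\le C_{n,\beta}\,t^{-n}\int_{|y-z|<t}|f(z)|\,dz$, which follows immediately from $\|\varphi\|_\infty\le C_{n,\beta}$ and handles the ``far'' part of the cone uniformly in $\varphi$. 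Second, in the localization step one should note that $A_\beta(f\chi_{3Q})(y,t)=A_\beta f(y,t)$ for $(y,t)$ in the truncated cone over $Q$ with $t\lesssim\ell(Q)$, since $\mathrm{supp}\,\varphi_t\subset B(0,t)$; this is what lets you replace $f$ by $f\chi_{3Q}$ cleanly. With these points in hand your sketch would go through. The alternative you mention at the end --- a direct $L^2(w)$ bound for $w\in A_2$ followed by extrapolation --- is also viable and is in fact closer in spirit to how the present paper uses the result.
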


The next extrapolation theorem 
on weighted Lebesgue spaces
has recently proved by
Cruz-Uribe and Wang \cite[Theorem 2.6]{CruzWang-arxiv}.

\begin{theorem}
\label{Theorem-weighted-extrapolation}
Suppose that 
there exists a constant $1<p_0<\infty$ such that 
for every $w_0\in A_{p_0}$, the inequality
\[
\| f \|_{L^{p_0}(w_0)} \le C\, \| g \|_{L^{p_0}(w_0)}
\]
holds for all $f\in L^{p_0}(w_0)$ and all measurable functions $g$.
Let $p(\cdot)\in \mathcal{P}(\mathbb{R}^n)$ and $w$ be a weight.
If the Hardy--Littlewood maximal operator $M$ is bounded on
$L^{p(\cdot)}(w)$ and on $L^{p'(\cdot)}(w^{-\frac{1}{p(\cdot)-1}})$, 
then we have the inequality
\[
\| f \|_{L^{p(\cdot)}(w)}
\le C\, \| g \|_{L^{p(\cdot)}(w)}
\]
holds for all $f\in L^{p(\cdot)}(w)$ and all measurable functions $g$.
\end{theorem}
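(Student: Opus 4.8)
The plan is to prove this by Rubio de Francia extrapolation, carried out inside the weighted Banach function space $X:=L^{p(\cdot)}(w)$. By Lemma~\ref{Lemma-weighted-Banach}, together with the identification of the associate space of $L^{p(\cdot)}(\mathbb{R}^n)$ with $L^{p'(\cdot)}(\mathbb{R}^n)$ recalled above, $X$ is a Banach function space whose associate space is $L^{p'(\cdot)}(\mathbb{R}^n,w^{-1/p(\cdot)})$, which in the notation introduced above equals $L^{p'(\cdot)}\bigl(w^{-1/(p(\cdot)-1)}\bigr)=:X'$; thus the two hypotheses on $M$ say precisely that $M$ is bounded on $X$ and on $X'$. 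We may assume $\|g\|_{X}<\infty$, since otherwise there is nothing to prove. First I would dualize: by the Lorentz--Luxemberg theorem and the generalized H\"older inequality (Lemma~\ref{Lemma-Banach-associate}),
\[
\|f\|_{X}\ \le\ C\,\sup\Bigl\{\,\int_{\mathbb{R}^n}|f(x)|\,h(x)\,dx\ :\ h\ge 0,\ \|h\|_{X'}\le 1\,\Bigr\},
\]
so it suffices to bound $\int_{\mathbb{R}^n}|f|\,h\,dx$ by $C\|g\|_{X}$ uniformly over such $h$.

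Fix $h\ge 0$ with $\|h\|_{X'}\le 1$. The next step is to manufacture, out of $h$ and $g$, a weight $w_0\in A_{p_0}$. I would run the Rubio de Francia construction from the proof of Lemma~\ref{Lemma-2} twice. Applied on $X'$ to $h$ it yields a function $\mathcal{R}h$ with $h\le\mathcal{R}h$ pointwise, $\|\mathcal{R}h\|_{X'}\le 2$, and $\mathcal{R}h\in A_1$ with $[\mathcal{R}h]_{A_1}\le 2\|M\|_{X'\to X'}$; the analogous construction with $X$ in place of $X'$, applied to $|g|$, yields $\sigma$ with $|g|\le\sigma$ pointwise, $\|\sigma\|_{X}\le 2\|g\|_{X}$, and $\sigma\in A_1$ with $[\sigma]_{A_1}\le 2\|M\|_{X\to X}$. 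By the easy half of the Jones factorization theorem ($u\,v^{1-p_0}\in A_{p_0}$ whenever $u,v\in A_1$, with $[u\,v^{1-p_0}]_{A_{p_0}}\le[u]_{A_1}[v]_{A_1}^{p_0-1}$), the weight
\[
w_0:=(\mathcal{R}h)\,\sigma^{1-p_0}
\]
lies in $A_{p_0}$ with an $A_{p_0}$-constant bounded independently of $h$.

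The main step is to invoke the $L^{p_0}(w_0)$ hypothesis and then collapse both sides to $\|g\|_{X}$. Writing $|f|\,\mathcal{R}h=\bigl(|f|^{p_0}(\mathcal{R}h)\sigma^{1-p_0}\bigr)^{1/p_0}\bigl((\mathcal{R}h)\sigma\bigr)^{1/p_0'}$ and applying H\"older's inequality with exponents $p_0,p_0'$,
\[
\int_{\mathbb{R}^n}|f|\,h\,dx\ \le\ \int_{\mathbb{R}^n}|f|\,\mathcal{R}h\,dx\ \le\ \left(\int_{\mathbb{R}^n}|f|^{p_0}w_0\,dx\right)^{1/p_0}\left(\int_{\mathbb{R}^n}(\mathcal{R}h)\,\sigma\,dx\right)^{1/p_0'}.
\]
For the second factor, the generalized H\"older inequality of Lemma~\ref{Lemma-Banach-associate} gives $\int(\mathcal{R}h)\sigma\le\|\sigma\|_{X}\|\mathcal{R}h\|_{X'}\le 4\|g\|_{X}$. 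For the first factor, the hypothesis gives $\int|f|^{p_0}w_0\le C\int|g|^{p_0}w_0$; since $0\le|g|\le\sigma$ and $1-p_0\le 0$ we have $|g|^{p_0}\sigma^{1-p_0}\le\sigma$, whence $\int|g|^{p_0}w_0\le\int\sigma\,(\mathcal{R}h)\,dx\le 4\|g\|_{X}$, again by the generalized H\"older inequality. Combining the two factors yields $\int_{\mathbb{R}^n}|f|\,h\,dx\le C\|g\|_{X}$ with $C$ independent of $h$, and taking the supremum over $h$ finishes the argument. (We suppress the standard truncation of $f$ needed so that the hypothesis is only applied to functions genuinely lying in $L^{p_0}(w_0)$, together with the corresponding limiting argument using the Fatou property of $X$.)

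I expect the delicate point to be the design of $w_0$. It must simultaneously (i) be an $A_{p_0}$ weight whose $A_{p_0}$-constant is controlled uniformly in $h$, so that the constant in the $L^{p_0}$ hypothesis does not degenerate; (ii) be built so that the H\"older splitting regenerates exactly the quantity $\int|f|\,\mathcal{R}h$; and (iii) dominate $|g|$ through the factor $\sigma^{1-p_0}$, so that once the hypothesis is invoked the right-hand side telescopes, via the generalized H\"older inequality for the pair $(X,X')$, down to a multiple of $\|g\|_{X}$. Orchestrating the two iteration operators so that all three hold at once is the crux; by contrast, the verification of the properties of the Rubio de Francia operator on a general Banach function space and the truncation bookkeeping are routine and follow exactly the pattern already used in the proof of Lemma~\ref{Lemma-2}.
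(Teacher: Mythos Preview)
The paper does not prove this theorem; it is quoted verbatim as Theorem~2.6 of Cruz-Uribe and Wang \cite{CruzWang-arxiv} and used as a black box. So there is no in-paper proof to compare against. Your argument is the standard Rubio de Francia extrapolation, which is exactly the machinery behind the cited result, and it is correctly executed: the identification of $X'=L^{p'(\cdot)}(w^{-1/(p(\cdot)-1)})$ via Lemma~\ref{Lemma-weighted-Banach}, the two iteration operators on $X$ and $X'$, the Jones-type factorization $w_0=(\mathcal{R}h)\,\sigma^{1-p_0}\in A_{p_0}$ with uniformly bounded constant, and the H\"older/duality collapse all go through. Two minor points worth flagging explicitly: you need $\sigma>0$ a.e.\ for $\sigma^{1-p_0}$ to make sense, which holds once $g\not\equiv 0$ since $\sigma\ge M|g|/(2A)$ (the case $g\equiv 0$ being trivial); and, as you note parenthetically, the hypothesis is only stated for $f\in L^{p_0}(w_0)$, so a truncation of $f$ and a Fatou-property passage are genuinely needed. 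Apart from these routine caveats, your proof is complete and in fact supplies what the paper merely cites.
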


\begin{remark}
For general variable exponent $p(\cdot) \in \mathcal{P}(\mathbb{R}^n)$, 
it is not proved that the assumption $w\in A_{p(\cdot)}$ implies the equivalence of 
the following two conditions
\begin{enumerate}
\item[(a)]
$M$ is bounded on $L^{p(\cdot)}(w)$.
\item[(b)]
$M$ is bounded on $L^{p'(\cdot)}(w^{-\frac{1}{p(\cdot)-1}})$.
\end{enumerate}
If we additionally suppose that $p(\cdot) \in LH(\mathbb{R}^n)$, then
(a) is immediately true.
We note that $p(\cdot) \in LH(\mathbb{R}^n)\cap \mathcal{P}(\mathbb{R}^n)$
implies  $p'(\cdot) \in LH(\mathbb{R}^n)\cap \mathcal{P}(\mathbb{R}^n)$. 
Thus (b) is also true.
\end{remark}

Combing the two theorems above, 
we have the following boundedness of 
the intrinsic square function on weighted Lebesgue spaces with variable exponent.

\begin{corollary} 
\label{Corollary-Wilson}
Let $0<\beta \le 1$, $p(\cdot) \in LH(\mathbb{R}^n)\cap \mathcal{P}(\mathbb{R}^n)$
and $w\in A_{p(\cdot)}$.
Then the intrinsic square function $S_{\beta}$ is bounded on $L^{p(\cdot)}(w)$.
\end{corollary}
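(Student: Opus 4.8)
The plan is to obtain the variable-exponent bound from the fixed-exponent case (Theorem \ref{Theorem-Wilson}) by feeding it into the variable-exponent extrapolation theorem (Theorem \ref{Theorem-weighted-extrapolation}). So the entire proof reduces to lining up the hypotheses of the latter.

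First I would fix an auxiliary constant exponent, say $p_0=2$. Theorem \ref{Theorem-Wilson} tells us that for every $w_0\in A_{p_0}$ there is a constant $C$, depending only on $n$, $\beta$, $p_0$ and $[w_0]_{A_{p_0}}$, such that $\|S_\beta h\|_{L^{p_0}(w_0)}\le C\|h\|_{L^{p_0}(w_0)}$, the inequality being vacuous when the right-hand side is infinite. Thus the family of pairs $(f,g)=(S_\beta h,h)$ satisfies the hypothesis of Theorem \ref{Theorem-weighted-extrapolation}, and it remains only to check the two maximal-operator hypotheses of that theorem for the given $p(\cdot)$ and $w$.

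Second, I would verify that $M$ is bounded on $L^{p(\cdot)}(w)$ and on $L^{p'(\cdot)}(w^{-1/(p(\cdot)-1)})$. For the first space this is immediate: $p(\cdot)\in\mathcal{P}(\mathbb{R}^n)\cap LH(\mathbb{R}^n)$ and $w\in A_{p(\cdot)}$, so $(A)\Rightarrow(C)$ in Theorem \ref{Theorem-Muckenhoupt-variable-exponent} applies. For the second space I would use that $p'(\cdot)\in\mathcal{P}(\mathbb{R}^n)\cap LH(\mathbb{R}^n)$ as well (as recorded in the Remark following Theorem \ref{Theorem-weighted-extrapolation}), together with the self-duality of the $A_{p(\cdot)}$ condition: writing $v:=w^{-1/(p(\cdot)-1)}$ and using $\frac{1}{p(\cdot)-1}=\frac{p'(\cdot)}{p(\cdot)}$, one gets $v^{1/p'(\cdot)}=w^{-1/p(\cdot)}$ and $v^{-1/p'(\cdot)}=w^{1/p(\cdot)}$, so the $A_{p'(\cdot)}$ testing quantity for $v$ over a ball $B$ is literally the product of the same two norms appearing in the $A_{p(\cdot)}$ testing quantity for $w$. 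Hence $w\in A_{p(\cdot)}$ forces $v\in A_{p'(\cdot)}$, and applying $(A)\Rightarrow(C)$ of Theorem \ref{Theorem-Muckenhoupt-variable-exponent} to the pair $(p'(\cdot),v)$ gives the boundedness of $M$ on $L^{p'(\cdot)}(v)=L^{p'(\cdot)}(w^{-1/(p(\cdot)-1)})$.

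Finally, with both maximal bounds in hand, Theorem \ref{Theorem-weighted-extrapolation} applied to the family $(S_\beta h,h)$ yields $\|S_\beta h\|_{L^{p(\cdot)}(w)}\le C\|h\|_{L^{p(\cdot)}(w)}$ for all $h$, which is exactly the asserted boundedness. The only step that is not a direct quotation of the results above is the self-duality identity $w\in A_{p(\cdot)}\iff w^{-1/(p(\cdot)-1)}\in A_{p'(\cdot)}$, and even that is essentially a change of exponents in Definition \ref{def-variable-Muckenhoupt}; so I expect no serious obstacle, the work being entirely bookkeeping to match the hypotheses of the extrapolation theorem.
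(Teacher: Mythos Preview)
Your proposal is correct and is exactly the argument the paper intends: the corollary is stated as an immediate consequence of combining Theorem~\ref{Theorem-Wilson} with Theorem~\ref{Theorem-weighted-extrapolation}, the two maximal-operator hypotheses being supplied via Theorem~\ref{Theorem-Muckenhoupt-variable-exponent} and the Remark just above the corollary. Your explicit verification of the self-duality $w\in A_{p(\cdot)}\Leftrightarrow w^{-1/(p(\cdot)-1)}\in A_{p'(\cdot)}$ is precisely what that Remark is invoking when it asserts that condition~(b) holds once $p(\cdot)\in LH(\mathbb{R}^n)\cap\mathcal{P}(\mathbb{R}^n)$.
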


\subsection{Statement of the main result}

\begin{theorem}
Let $0<\beta \le 1$, 
$p(\cdot)\in LH(\mathbb{R}^n) \cap \mathcal{P}(\mathbb{R}^n)$, 
$0<q<\infty$, 
$1/p_-<r<1$, 
$w\in A_{rp(\cdot)}$ and 
$-n\delta<\alpha<n(1-r)$, where 
$0<\delta<1$ is a constant 
satisfying 
\[
\frac{\| \chi_{B_k} \|_{L^{p(\cdot)}(w)}}{\| \chi_{B_l} \|_{L^{p(\cdot)}(w)}}
\le
C\, 2^{\delta n(k-l)}
\]
for all $k, \, l\in \mathbb{Z}$ with $k\le l$.
Then the intrinsic square function $S_{\beta}$ is bounded on
$\dot{K}^{\alpha,q}_{p(\cdot)}(w)$ and on $K^{\alpha,q}_{p(\cdot)}(w)$.  
\end{theorem}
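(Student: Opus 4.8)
The plan is to reduce the boundedness of $S_\beta$ on the Herz space to the already-established boundedness on weighted variable Lebesgue spaces (Corollary \ref{Corollary-Wilson}), following the standard "three-region" decomposition used for Herz-space estimates. I will write out the argument for the homogeneous space $\dot{K}^{\alpha,q}_{p(\cdot)}(w)$; the non-homogeneous case is handled the same way, with the sum over $m\ge 0$ and the innermost region treated by a single application of Corollary \ref{Corollary-Wilson} on $B_0$. Fix $f\in \dot{K}^{\alpha,q}_{p(\cdot)}(w)$ and write $f=\sum_{j\in\Z} f_j$ with $f_j:=f\chi_j$. For each $k\in\Z$ I estimate $\|(S_\beta f)\chi_k\|_{L^{p(\cdot)}(w)}$ by splitting the sum in $j$ into three parts: $j\le k-2$ (far interior), $|j-k|\le 1$ (local), and $j\ge k+2$ (far exterior). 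For the local part one uses subadditivity of $S_\beta$ together with Corollary \ref{Corollary-Wilson}, which gives $\|(S_\beta f_j)\chi_k\|_{L^{p(\cdot)}(w)}\le C\|f_j\|_{L^{p(\cdot)}(w)}$, and these three exponents $j=k-1,k,k+1$ contribute a harmless constant.

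The heart of the matter is the two far regions, and here I would use the pointwise size estimate for $A_\beta$ coming from the cancellation and support of $\varphi$: for $x\in D_k$ and $j\le k-2$ one has, for $y$ with $|x-y|<t$, that $A_\beta f_j(y,t)\lesssim t^{-n}\int_{D_j}|f(z)|\,\frac{|z|^{?}}{?}dz$ with the right geometric decay, so that after integrating $\frac{dy\,dt}{t^{n+1}}$ over $\Gamma(x)$ one obtains
\[
(S_\beta f_j)(x)\le C\,2^{(j-k)\gamma}\,\frac{1}{|B_k|}\int_{D_j}|f(z)|\,dz \qquad (x\in D_k,\ j\le k-2)
\]
for a suitable exponent $\gamma=\gamma(\beta,n)>0$, and symmetrically a factor $2^{(k-j)\gamma'}$ for the exterior region $j\ge k+2$. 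One then passes from the average of $|f|$ over $D_j$ to the norm $\|f_j\|_{L^{p(\cdot)}(w)}$ by the generalized Hölder inequality of Lemma \ref{Lemma-Banach-associate}: $\int_{D_j}|f|\le \|f_j\|_{L^{p(\cdot)}(w)}\,\|w^{-1/p(\cdot)}\chi_{D_j}\|_{L^{p'(\cdot)}(w^{?})}$, which I convert, using the $A_{rp(\cdot)}$ hypothesis and the characteristic-function norm estimates, into $C\,|B_j|\,\|f_j\|_{L^{p(\cdot)}(w)}\,\|\chi_{B_j}\|_{L^{p(\cdot)}(w)}^{-1}$. Finally $\|\chi_k\|_{L^{p(\cdot)}(w)}\le\|\chi_{B_k}\|_{L^{p(\cdot)}(w)}$ and the ratio $\|\chi_{B_k}\|_{L^{p(\cdot)}(w)}/\|\chi_{B_j}\|_{L^{p(\cdot)}(w)}$ is controlled above by $2^{\delta n(k-j)}$ (for $k\le j$, exterior region) and below — via Lemma \ref{Lemma-1} applied to $X=L^{p(\cdot)}(w)$, whose associate-space maximal boundedness is exactly the $w\in A_{rp(\cdot)}\subset A_{p(\cdot)}$ input — by $C\,2^{n(k-j)}$ (interior region). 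Collecting, $\|(S_\beta f_j)\chi_k\|_{L^{p(\cdot)}(w)}\le C\,2^{-\varepsilon|k-j|}\|f_j\|_{L^{p(\cdot)}(w)}$ for some $\varepsilon>0$, where the condition $-n\delta<\alpha<n(1-r)$ is precisely what makes $\varepsilon$ positive after absorbing the weight $2^{\alpha(k-j)q}$ that appears when one sums against the Herz norm.

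The final assembly is routine: multiplying by $2^{\alpha k}$, summing over $k$ in $\ell^q$, and using the convolution inequality for $\ell^q$ (when $q\ge 1$, Young's inequality; when $0<q<1$, the $q$-triangle inequality together with the geometric decay $2^{-\varepsilon|k-j|}$, which is summable in any $\ell^q$) yields
\[
\|S_\beta f\|_{\dot{K}^{\alpha,q}_{p(\cdot)}(w)}\le C\Big(\sum_{j\in\Z} 2^{\alpha j q}\|f_j\|_{L^{p(\cdot)}(w)}^q\Big)^{1/q}=C\,\|f\|_{\dot{K}^{\alpha,q}_{p(\cdot)}(w)}.
\]
I expect the main obstacle to be the pointwise estimate on $A_\beta f_j(y,t)$ in the far regions together with the bookkeeping of which geometric exponent survives: one must check carefully, using property (3) in the definition of $\mathcal{C}_\beta$ (the $\beta$-Hölder bound, combined with the vanishing integral of $\varphi$) in the interior region and the compact support $\{|x|\le 1\}$ in the exterior region, that the exponent of $2^{j-k}$ obtained after the $\Gamma(x)$ integration is strictly larger than the threshold dictated by $\alpha$, $n$, $r$ and $\delta$ — and to verify that the generalized Hölder step and the $A_{rp(\cdot)}$-to-characteristic-function conversion are compatible with the exponent $r\in(1/p_-,1)$. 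Everything else reduces to Corollary \ref{Corollary-Wilson} and Lemmas \ref{Lemma-1} and \ref{Lemma-2} applied with $X=L^{p(\cdot)}(w)$.
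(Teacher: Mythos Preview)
Your three-region strategy, the local piece via Corollary~\ref{Corollary-Wilson}, the exterior piece via the hypothesis on $\delta$, and the final $\ell^q$ assembly are all in line with the paper. The gap is in the interior region $j\le k-2$.

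First, the extra pointwise decay you hope for does not exist. For $x\in D_k$ and $j\le k-2$ the paper obtains only
\[
|S_\beta(f\chi_j)(x)|\le C\,|x|^{-n}\int_{D_j}|f(z)|\,dz\le C\,\frac{|B_j|}{|B_k|}\,\|f\chi_j\|_{L^{p(\cdot)}(w)}\,\|\chi_{B_j}\|_{L^{p(\cdot)}(w)}^{-1},
\]
using nothing but $\mathrm{supp}\,\varphi\subset\{|x|\le 1\}$ and $\|\varphi\|_\infty\le C$; neither the mean-zero condition nor the $\beta$-H\"older bound on $\varphi$ produces a factor $2^{(j-k)\gamma}$ with $\gamma>0$, because $f$ itself carries no cancellation or smoothness. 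So you must take $\gamma=0$.

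Second, with $\gamma=0$ your application of Lemma~\ref{Lemma-1} to $X=L^{p(\cdot)}(w)$ gives $\|\chi_{B_k}\|_{L^{p(\cdot)}(w)}/\|\chi_{B_j}\|_{L^{p(\cdot)}(w)}\le C\,|B_k|/|B_j|=C\,2^{n(k-j)}$, which exactly cancels the $|B_j|/|B_k|$ already present and leaves no decay whatsoever; the interior sum diverges. This is precisely where $r$ enters, and your sketch never uses it. The paper's device is the identity $\|\chi_B\|_{L^{p(\cdot)}(w)}=\|\chi_B\|_{L^{rp(\cdot)}(w)}^{\,r}$ together with Lemma~\ref{Lemma-1} applied to $X=L^{rp(\cdot)}(w)$ (legitimate because $r>1/p_-$ forces $rp(\cdot)\in\mathcal{P}(\mathbb{R}^n)$ and $w\in A_{rp(\cdot)}$ gives $M$ bounded there), which yields
\[
\frac{\|\chi_{B_k}\|_{L^{p(\cdot)}(w)}}{\|\chi_{B_j}\|_{L^{p(\cdot)}(w)}}
=\left(\frac{\|\chi_{B_k}\|_{L^{rp(\cdot)}(w)}}{\|\chi_{B_j}\|_{L^{rp(\cdot)}(w)}}\right)^{r}
\le C\left(\frac{|B_k|}{|B_j|}\right)^{r}=C\,2^{nr(k-j)}.
\]
Now $\frac{|B_j|}{|B_k|}\cdot 2^{nr(k-j)}=2^{n(r-1)(k-j)}$ with $r-1<0$, and the condition $\alpha<n(1-r)$ is exactly what makes $\sum_{j\le k-2}2^{\alpha(k-j)}2^{n(r-1)(k-j)}$ converge. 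Until you route the interior estimate through $L^{rp(\cdot)}(w)$ in this way, the constraint $\alpha<n(1-r)$ has no point of entry in your argument.
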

\begin{proof}
We prove the boundedness on the homogeneous space
$\dot{K}^{\alpha,q}_{p(\cdot)}(w)$. 
The proof similar to below is valid for the non-homogeneous space $K^{\alpha,q}_{p(\cdot)}(w)$. 
We
decompose $f\in \dot{K}^{\alpha,q}_{p(\cdot)}(w)$ as
\[
f=f\chi_{B_{k+1}\, \backslash \, B_{k-2}}
+ f\chi_{B_{k-2}}
+f\chi_{\mathbb{R}^n\, \backslash \, B_{k+1}}
.
\]
Thus we obtain
\begin{eqnarray*}
\lefteqn{ \| S_{\beta}f\|_{\dot{K}^{\alpha,q}_{p(\cdot)}(w)}
} \\
&\le& C  \Bigg\{  
\left(
\sum_{k=-\infty}^{\infty}
2^{\alpha k q} \|
S_{\beta}(f\chi_{B_{k+1}\, \backslash \, B_{k-2}}) \chi_k
\|_{L^{p(\cdot)}(w)}^q
\right)^{1/q} 
\\
& & \qquad
+ 
\left(
\sum_{k=-\infty}^{\infty}
2^{\alpha k q} \|
S_{\beta}(f\chi_{B_{k-2}}) \chi_k
\|_{L^{p(\cdot)}(w)}^q
\right)^{1/q} 
\\
& & \qquad +
\left(
\sum_{k=-\infty}^{\infty}
2^{\alpha k q} \|
S_{\beta}(f\chi_{\mathbb{R}^n\, \backslash \, B_{k+1}}) \chi_k
\|_{L^{p(\cdot)}(w)}^q
\right)^{1/q} 
\Bigg\}  
\\
&=:& C(T_1+T_2+T_3).
\end{eqnarray*}
For each $i=1, \, 2, \, 3$ we start the estimate $T_i$. 

We first consider $T_1$. 
Using the boundedness of $S_{\beta}$ on $L^{p(\cdot)}(w)$ we get 
\begin{eqnarray*}
T_1
&\le &
\left(
\sum_{k=-\infty}^{\infty}
2^{\alpha k q} \|
S_{\beta}(f\chi_{B_{k+1}\, \backslash \, B_{k-2}}) 
\|_{L^{p(\cdot)}(w)}^q
\right)^{1/q} 
\\
&\le &
C
\left(
\sum_{k=-\infty}^{\infty}
2^{\alpha k q} \|
f\chi_{B_{k+1}\, \backslash \, B_{k-2}}
\|_{L^{p(\cdot)}(w)}^q
\right)^{1/q} 
\\
&\le &
C\, \| f\|_{\dot{K}^{\alpha,q}_{p(\cdot)}(w)} .
\end{eqnarray*}

Next we estimate $T_2$.
\begin{eqnarray}
T_2
&=&
\left(
\sum_{k=-\infty}^{\infty}
2^{\alpha k q} \left\|
S_{\beta}(\sum_{l=-\infty}^{k-2}f\chi_l) \chi_k
\right\|_{L^{p(\cdot)}(w)}^q
\right)^{1/q} 
\nonumber   \\
&\le&
\left(
\sum_{k=-\infty}^{\infty}
2^{\alpha k q} 
\left(  \sum_{l=-\infty}^{k-2}
\left\|
S_{\beta}(f\chi_l) \chi_k
\right\|_{L^{p(\cdot)}(w)}
\right)^q
\right)^{1/q} .
\label{T2-first}
\end{eqnarray}
Now we take 
$k\in \mathbb{Z}$, $l\le k-2$, $x\in D_k$ and $(y,t)\in \Gamma(x)$. 
For every $\varphi \in \mathcal{C}_{\beta}$ we have
\begin{eqnarray*}
\left|  (f\chi_l)*\varphi_t(y) \right|
&=&
\left|
\int_{D_l} \varphi_t(y)f(z)\, dz
\right|
\\
&\le&
Ct^{-n} \int_{\{  z\in D_l \, : \, |y-z|<t \}} |f(z)|\, dz .
\end{eqnarray*}
Using a point $z\in D_l$ with $|y-z|<t$ we obtain
\begin{eqnarray*}
t
&=&
\frac{1}{2}(t+t)
>\frac{1}{2}(|x-y|+|y-z|)
\ge \frac{1}{2}|x-z|
\ge
\frac{1}{2}(|x|-|z|)
\\
&\ge & \frac{1}{2}(|x|-2^l)
\ge \frac{1}{2}(|x|-2^{k-2})
\ge \frac{1}{2}(|x|-2^{-1}|x|)=\frac{|x|}{4} .
\end{eqnarray*}
Hence we get 
\begin{align*}
\lefteqn{ \left| S_{\beta}(f\chi_l)(x) \right|
} \\
&= 
\left( \int \! \int_{\Gamma(x)} 
\left( \sup_{\varphi \in \mathcal{C}_{\beta}} 
|(f\chi_l)*\varphi_t(y)|^2
\frac{dy\, dt}{t^{n+1}}
\right)^2
\right)^{1/2}
\\
&\le 
C\left( \int_{\frac{|x|}{4}}^{\infty}
\int_{\{ y \, : \, |x-y|<t \}}
\left( \frac{1}{t^n} \int_{\{  z\in D_l \, : \, |y-z|<t \}} |f(z)|\, dz \right)^2
\frac{dy \, dt}{t^{n+1}}
\right)^{1/2}
\\
&\le
C\left(  \int_{D_l} |f(z)|\, dz \right)
\left( \int_{\frac{|x|}{4}}^{\infty}
\left( \int_{\{ y \, : \, |x-y|<t \}}dy \right)
\frac{dt}{t^{3n+1}}
\right)^{1/2}
\\
&=
C\left(  \int_{D_l} |f(z)|\, dz \right)
\left(  \int_{\frac{|x|}{4}}^{\infty}
\frac{dt}{t^{2n+1}}
\right)^{1/2}
\\
&=
C\left(  \int_{D_l} |f(z)|\, dz \right)
|x|^{-n} .
\end{align*}
Applying the generalized H\"older inequality and 
Lemma \ref{Lemma-Izuki-Banach}, we have
\begin{eqnarray*}
\lefteqn{ \left| S_{\beta}(f\chi_l)(x) \right|
} \\
&\le &
C\, |x|^{-n} 
\left\|  fw^{1/p(\cdot)}\chi_l \right\|_{L^{p(\cdot)}(\mathbb{R}^n)}
\cdot
\left\|  w^{-1/p(\cdot)}\chi_l \right\|_{L^{p'(\cdot)}(\mathbb{R}^n)}
\\
&\le &
C\, |x|^{-n} 
\left\|  fw^{1/p(\cdot)}\chi_l \right\|_{L^{p(\cdot)}(\mathbb{R}^n)}
\cdot
\left\|  w^{-1/p(\cdot)}\chi_{B_l} \right\|_{L^{p'(\cdot)}(\mathbb{R}^n)} 
\\
&=&
C\, |x|^{-n} 
\left\|  fw^{1/p(\cdot)}\chi_l \right\|_{L^{p(\cdot)}(\mathbb{R}^n)}
\cdot
\| \chi_{B_l} \|_{(L^{p(\cdot)}(w))'}
\\
&\le&
C\, |x|^{-n} 
\left\|  fw^{1/p(\cdot)}\chi_l \right\|_{L^{p(\cdot)}(\mathbb{R}^n)}
\cdot
|B_l| \| \chi_{B_l} \|_{L^{p(\cdot)}(w)}^{-1}.
\end{eqnarray*}
We note that $x\in D_k$ implies $|x|>2^{k-1}$, that is $|x|^{-n}<C|B_k|^{-1}$.
Thus we get
\begin{eqnarray}
\left| S_{\beta}(f\chi_l)(x) \right|
\le 
C\cdot \frac{|B_l|}{|B_k|} \| f \chi_l \|_{L^{p(\cdot)}(w)}  \| \chi_{B_l} \|_{L^{p(\cdot)}(w)}^{-1}.
\label{T2-second}
\end{eqnarray}
Combing (\ref{T2-first}) and (\ref{T2-second}), we obtain
\begin{eqnarray}
\lefteqn{ T_2
}  \nonumber \\
&\le&
C\left(
\sum_{k=-\infty}^{\infty}
2^{\alpha k q} 
\left(  \sum_{l=-\infty}^{k-2}
\frac{|B_l|}{|B_k|} 
\frac{ \| \chi_{k} \|_{L^{p(\cdot)}(w)} }{ \| \chi_{B_l} \|_{L^{p(\cdot)}(w)} }
 \| f \chi_l \|_{L^{p(\cdot)}(w)} 
\right)^q
\right)^{1/q} 
\nonumber \\
&\le&
C\left(
\sum_{k=-\infty}^{\infty}
2^{\alpha k q} 
\left(  \sum_{l=-\infty}^{k-2}
\frac{|B_l|}{|B_k|} 
\frac{ \| \chi_{B_k} \|_{L^{p(\cdot)}(w)} }{ \| \chi_{B_l} \|_{L^{p(\cdot)}(w)} }
 \| f \chi_l \|_{L^{p(\cdot)}(w)} 
\right)^q
\right)^{1/q} .
\nonumber
\end{eqnarray}
For every $k, \, l\in \mathbb{Z}$ such that $k\ge l+2$, 
we see that $B_l \subset B_k$.
We also note that $rp(\cdot) \in LH(\mathbb{R}^n) \cap \mathcal{P}(\mathbb{R}^n)$. 
Thus by virtue of Lemma \ref{Lemma-1} with $X=L^{rp(\cdot)}(w)$
we have
\begin{eqnarray*}
\frac{\| \chi_{B_k} \|_{L^{p(\cdot)}(w)} }{ \| \chi_{B_l} \|_{L^{p(\cdot)}(w)} }
=\left(
\frac{\| \chi_{B_k} \|_{L^{rp(\cdot)}(w)} }{ \| \chi_{B_l} \|_{L^{rp(\cdot)}(w)} }
\right)^r
\le C
\left(
\frac{|B_k|}{|B_l|}
\right)^r
=C\, 2^{(k-l)nr}.
\end{eqnarray*}
Hence we have
\begin{eqnarray}
\lefteqn{ T_2
} \nonumber \\
&\le & C
\left(
\sum_{k=-\infty}^{\infty}
2^{\alpha k q} 
\left(  \sum_{l=-\infty}^{k-2}
2^{n(r-1)(k-l)}
 \| f \chi_l \|_{L^{p(\cdot)}(w)} 
\right)^q
\right)^{1/q} .
\label{T2-third}
\end{eqnarray}
We shall continue the estimate 
remarking the range of $q$ carefully. 

Now let us suppose that $0<q\le 1$. 
For general non-negative sequence $\{ a_{\lambda} \}_{\lambda \in \Lambda}$, 
it is well-known that the inequality
\begin{equation}
\left( \sum_{\lambda \in \Lambda} a_{\lambda} \right)^q
\le \sum_{\lambda \in \Lambda} a_{\lambda}{}^q
\label{known-inequality}
\end{equation}
holds. 
Applying (\ref{known-inequality}) to (\ref{T2-third}), we have
\begin{eqnarray*}
T_2
&\le &
C\left(
\sum_{k=-\infty}^{\infty}
2^{\alpha k q} 
  \sum_{l=-\infty}^{k-2}
2^{qn(r-1)(k-l)}
 \| f \chi_l \|_{L^{p(\cdot)}(w)}^q 
\right)^{1/q}
\\
&=&
C\left(   \sum_{l=-\infty}^{\infty}
2^{\alpha l q} \| f \chi_l \|_{L^{p(\cdot)}(w)}^q
\sum_{k=l+2}^{\infty} 
 2^{q(k-l)(\alpha -n(1-r))}
\right)^{1/q}
\\
&=&
C\left(   \sum_{l=-\infty}^{\infty}
2^{\alpha l q} \| f \chi_l \|_{L^{p(\cdot)}(w)}^q
\right)^{1/q}
=C\, \| f \|_{\dot{K}^{\alpha,q}_{p(\cdot)}(w)} ,
\end{eqnarray*}
where we have used the condition that $\alpha<n(1-r)$.

We next consider the case $1<q<\infty$.
Because we have supposed that $\alpha<n(1-r)$, we can 
take a constant $1<s<\infty$ so that $\alpha<\frac{n}{s}(1-r)$.
Using the usual H\"older inequality, for every $k\in \mathbb{Z}$ we get
\begin{eqnarray*}
\lefteqn{  
\left(  \sum_{l=-\infty}^{k-2}
2^{n(r-1)(k-l)}
 \| f \chi_l \|_{L^{p(\cdot)}(w)} 
\right)^q
}
\\
&=&
\left(  \sum_{l=-\infty}^{k-2}
2^{(l-k)\cdot \frac{n}{s}(1-r)} 2^{-\alpha l}
\cdot
2^{(l-k)\cdot \frac{n}{s'}(1-r)}
2^{\alpha l}
 \| f \chi_l \|_{L^{p(\cdot)}(w)} 
\right)^q
\\
&\le &
\left(  \sum_{l=-\infty}^{k-2}
2^{((l-k)\cdot \frac{n}{s}(1-r)-\alpha l)q'}
\right)^{q/q'}
\\
& & \times
\left(  \sum_{l=-\infty}^{k-2}
2^{(l-k)\cdot \frac{nq}{s'}(1-r)}
2^{\alpha lq}
 \| f \chi_l \|_{L^{p(\cdot)}(w)}^q 
\right)
\\
&=&
\left(  \sum_{l=-\infty}^{k-2}
2^{(l(\frac{n}{s}(1-r)-\alpha)-k\cdot \frac{n}{s}(1-r))q'}
\right)^{q/q'}
\\
& & \times
\left(  \sum_{l=-\infty}^{k-2}
2^{(l-k)\cdot \frac{nq}{s'}(1-r)}
2^{\alpha lq}
 \| f \chi_l \|_{L^{p(\cdot)}(w)}^q 
\right)
\\
&=&
C\cdot 2^{-k\alpha q} \left(  \sum_{l=-\infty}^{k-2}
2^{(l-k)\cdot \frac{nq}{s'}(1-r)}
2^{\alpha lq}
 \| f \chi_l \|_{L^{p(\cdot)}(w)}^q 
\right) .
\end{eqnarray*}
Applying this estimate to (\ref{T2-third}) we get
\begin{align*}
\lefteqn{ T_2
} \\
&\le C
\left(
\sum_{k=-\infty}^{\infty}
2^{\alpha k q} 
\cdot 2^{-k\alpha q} \left(  \sum_{l=-\infty}^{k-2}
2^{(l-k)\cdot \frac{nq}{s'}(1-r)}
2^{\alpha lq}
 \| f \chi_l \|_{L^{p(\cdot)}(w)}^q 
\right) 
\right)^{1/q} 
\\
&=
C
\left(
\sum_{l=-\infty}^{\infty}
2^{\alpha l q} \| f \chi_l \|_{L^{p(\cdot)}(w)}^q 
\sum_{k=l+2}^{\infty}
2^{(l-k)\cdot \frac{nq}{s'}(1-r)}
\right)^{1/q} 
\\
&=
C
\left(
\sum_{l=-\infty}^{\infty}
2^{\alpha l q} \| f \chi_l \|_{L^{p(\cdot)}(w)}^q 
\right)^{1/q} 
\\
&=C\, \| f \|_{\dot{K}^{\alpha,q}_{p(\cdot)}(w)}.
\end{align*}

Finally we estimate $T_3$. 
We note that
\begin{eqnarray*}
T_3\le
\left(
\sum_{k=-\infty}^{\infty}
2^{\alpha k q} 
\left( \sum_{l=k+2}^{\infty}
\|
S_{\beta}(f\chi_l) \chi_k
\|_{L^{p(\cdot)}(w)}
\right)^q
\right)^{1/q} .
\end{eqnarray*}
For every $k\in \mathbb{Z}$, $x\in D_k$, $l\ge k+2$, 
$(y,t)\in \Gamma(x)$ and 
$z\in D_l$ with $|y-z|<t$, we see that
\begin{eqnarray*}
t&=&
\frac{1}{2}(t+t)
>\frac{1}{2}(|x-y|+|y-z|)
\ge \frac{1}{2}|x-z|
\ge \frac{1}{2}(|z|-|x|)
\\
&>&
\frac{1}{2}(2^{l-1}-2^k)\ge 2^{l-3} .
\end{eqnarray*}
Thus we have
\begin{eqnarray*}
\lefteqn{ |S_{\beta}(f\chi_l)(x)|
} \\
&=&
\left(
\int \! \int_{\Gamma(x)}
\left( \sup_{\varphi \in \mathcal{C}_{\beta}}
|(f\chi_l)*\varphi_t(y)|
\right)^2
\frac{dy\, dt}{t^{n+1}}
\right)^{1/2}
\\
&\le&
C
\left(
\int \! \int_{\Gamma(x)}
\left( 
t^{-n}
\int_{\{ z\in D_l \, : \, |y-z|<t \}}
|f(z)|\, dz
\right)^2
\frac{dy\, dt}{t^{n+1}}
\right)^{1/2}
\\
&\le&
C
\left(
\int_{2^{l-3}}^{\infty}\int_{\{ y\, : \, |x-y|<t \}}
t^{-3n-1}
\left( 
\int_{D_l}
|f(z)|\, dz
\right)^2
dy\, dt
\right)^{1/2}
\\
&=&
C\left( 
\int_{D_l}
|f(z)|\, dz
\right)
\left(
\int_{2^{l-3}}^{\infty}
t^{-2n-1}
\, dt
\right)^{1/2}
\\
&=&
C\, |B_l|^{-1}
\int_{D_l}
|f(z)|\, dz .
\end{eqnarray*}
By virtue of the generalized H\"older inequality and 
Lemma \ref{Lemma-Izuki-Banach}, we have
\begin{align*}
|S_{\beta}(f\chi_l)(x)|
&\le 
C\, |B_l|^{-1}
\| fw^{1/p(\cdot)}\chi_l \|_{L^{p(\cdot)}(\mathbb{R}^n)}
\| w^{-1/p(\cdot)}\chi_l \|_{L^{p'(\cdot)}(\mathbb{R}^n)}
\\
&\le 
C\, 
\| fw^{1/p(\cdot)}\chi_l \|_{L^{p(\cdot)}(\mathbb{R}^n)}
\cdot |B_l|^{-1}
\| w^{-1/p(\cdot)}\chi_{B_l} \|_{L^{p'(\cdot)}(\mathbb{R}^n)}
\\
&\le 
C\, 
\| f\chi_l \|_{L^{p(\cdot)}(w)}
\cdot
\| \chi_{B_l} \|_{L^{p(\cdot)}(w)}^{-1}.
\end{align*}
Hence we obtain
\begin{eqnarray}
\lefteqn{ T_3 } \nonumber \\
&\le&
C\left(
\sum_{k=-\infty}^{\infty}
2^{\alpha k q} 
\left( \sum_{l=k+2}^{\infty}
\| f\chi_l \|_{L^{p(\cdot)}(w)}
\frac{ \| \chi_{k} \|_{L^{p(\cdot)}(w)} }
{ \| \chi_{B_l} \|_{L^{p(\cdot)}(w)} }
\right)^q
\right)^{1/q} 
\nonumber  \\
&\le &
C\left(
\sum_{k=-\infty}^{\infty}
2^{\alpha k q} 
\left( \sum_{l=k+2}^{\infty}
\| f\chi_l \|_{L^{p(\cdot)}(w)}
\frac{ \| \chi_{B_k} \|_{L^{p(\cdot)}(w)} }
{ \| \chi_{B_l} \|_{L^{p(\cdot)}(w)} }
\right)^q
\right)^{1/q} 
\nonumber  \\
&\le &
C\left(
\sum_{k=-\infty}^{\infty}
2^{\alpha k q} 
\left( \sum_{l=k+2}^{\infty}
\| f\chi_l \|_{L^{p(\cdot)}(w)}
2^{\delta n (k-l)}
\right)^q
\right)^{1/q} .
\label{T3-first}
\end{eqnarray}
In order to continue the estimate for $T_3$ 
we consider the two cases $0<q\le 1$ and $1<q<\infty$ respectively. 

We first assume that $0<q\le 1$.
Applying inequality (\ref{known-inequality}) again to (\ref{T3-first}), we get
\begin{eqnarray*}
T_3
&\le&
C\left(
\sum_{k=-\infty}^{\infty}
2^{\alpha k q} 
 \sum_{l=k+2}^{\infty}
\| f\chi_l \|_{L^{p(\cdot)}(w)}^q
2^{\delta n q (k-l)}
\right)^{1/q} 
\nonumber  \\
&=&
C\left(
\sum_{l=-\infty}^{\infty}
2^{\alpha l q} \| f\chi_l \|_{L^{p(\cdot)}(w)}^q
 \sum_{k=-\infty}^{l-2}
2^{q(\alpha +n\delta) (k-l)}
\right)^{1/q} 
\\
&=&
C\left(
\sum_{l=-\infty}^{\infty}
2^{\alpha l q} \| f\chi_l \|_{L^{p(\cdot)}(w)}^q
\right)^{1/q} 
=C\, \| f \|_{\dot{K}^{\alpha,q}_{p(\cdot)}(w)} ,
\end{eqnarray*}
where we have used the condition that $-n\delta<\alpha$.

Finally we estimate $T_3$ in the case $1<q<\infty$.
We can take a constant $1<u<\infty$ so that $\alpha +n\delta/u>0$ 
because we have supposed that $-n\delta<\alpha$.
Using the usual H\"older inequality, 
for each $k\in \mathbb{Z}$ we get
\begin{eqnarray}
\lefteqn{
\left( \sum_{l=k+2}^{\infty}
\| f\chi_l \|_{L^{p(\cdot)}(w)}
2^{\delta n (k-l)}
\right)^q
}  \nonumber  \\
&=&
\left( \sum_{l=k+2}^{\infty}
2^{\alpha l}
\| f\chi_l \|_{L^{p(\cdot)}(w)}
2^{\delta n (k-l)/u'}
\cdot
2^{-\alpha l} 2^{\delta n (k-l)/u}
\right)^q
\nonumber  \\
&\le &
\left( \sum_{l=k+2}^{\infty}
2^{\alpha l q}
\| f\chi_l \|_{L^{p(\cdot)}(w)}^q
2^{q \delta n (k-l)/u'}
\right)
\nonumber  \\
& & \qquad \times
\left( \sum_{l=k+2}^{\infty}
2^{-\alpha l q'} 2^{q' \delta n (k-l)/u}
\right)^{q/q'}
\nonumber \\
&=&
\left( \sum_{l=k+2}^{\infty}
2^{\alpha l q}
\| f\chi_l \|_{L^{p(\cdot)}(w)}^q
2^{q \delta n (k-l)/u'}
\right) \nonumber \\
& &\qquad \times
\left(  2^{q' \delta n k/u}
\sum_{l=k+2}^{\infty}
2^{- l q'(\alpha +n\delta/u)} 
\right)^{q/q'}
\nonumber \\
&=&
C\cdot 2^{-\alpha k q}
\left( \sum_{l=k+2}^{\infty}
2^{\alpha l q}
\| f\chi_l \|_{L^{p(\cdot)}(w)}^q
2^{q \delta n (k-l)/u'}
\right) .
\label{T3-second}
\end{eqnarray}
Applying (\ref{T3-second}) to (\ref{T3-first}) we obtain
\begin{eqnarray*}
\lefteqn{ 
T_3
}\\
&\le&
C\left(
\sum_{k=-\infty}^{\infty}
2^{\alpha k q} 
2^{-\alpha k q}
\left( \sum_{l=k+2}^{\infty}
2^{\alpha l q}
\| f\chi_l \|_{L^{p(\cdot)}(w)}^q
2^{q \delta n (k-l)/u'}
\right) 
\right)^{1/q}
\\
&=&
C\left(
\sum_{l=-\infty}^{\infty}
2^{\alpha l q} \| f\chi_l \|_{L^{p(\cdot)}(w)}^q
 \sum_{k=-\infty}^{l-2}
2^{q \delta n (k-l)/u'}
\right)^{1/q}
\\
&=&
C\left(
\sum_{l=-\infty}^{\infty}
2^{\alpha l q} \| f\chi_l \|_{L^{p(\cdot)}(w)}^q
\right)^{1/q}
=C\, \| f \|_{\dot{K}^{\alpha,q}_{p(\cdot)}(w)} .
\end{eqnarray*}

Consequently we have finished all estimates and proved the theorem.
\end{proof}

\section*{Acknowledgement} 
The first author was partially supported by Grand-in-Aid for Scientific Research  (C), 
No. 15K04928, for Japan Society for the Promotion of Science. 
The second author was partially supported by Grand-in-Aid for Scientific Research  (C), 
No. 16K05212, for Japan Society for the Promotion of Science.

\end{document}